\newcommand{\beq}{\begin{eqnarray}}
\newcommand{\eeq}{\end{eqnarray}}
\newcommand\Nat{\mathbb{N}}
\newcommand\RCA{\mathsf{RCA}}
\newcommand\ACA{\mathsf{ACA}}
\newcommand\ATR{\mathsf{ATR}}
\newcommand\RT{\mathsf{RT}}
\newcommand\CA{\mathsf{CA}}
\newcommand\FUT{\mathsf{FUT}}
\newcommand\WO{\mathsf{WO}}
\newcommand\WOP{\mathsf{WOP}}
\newcommand{\card}{\textrm{card}}
\newcommand\W{\mathrm{W}}
\newcommand\sW{\mathrm{sW}}
\newcommand{\LoX}{\mathcal{X}}
\newcommand\PP{\mathsf{P}}
\newcommand\Q{\mathsf{Q}}
\newcommand\boldomega{\pmb{\omega}}
\newcommand\boldepsilon{\pmb{\varepsilon}}
\newcommand\additionalsymbol{\text{\small{\#}}}
\newtheorem{proposition}{Proposition}
\newtheorem{theorem}{Theorem}
\newtheorem{corollary}{Corollary}
\newtheorem{lemma}{Lemma}
\newtheorem{claim}{Claim}
\newtheorem{open.problem}{Open Problem}
\newtheorem*{theorem*}{Theorem}
\newtheorem*{corollary*}{Corollary}
\newtheorem*{proposition*}{Proposition*}
\newtheorem*{lemma*}{Lemma}
\newtheorem*{fact*}{Fact}
\newtheorem*{claim*}{Claim}
\newtheorem*{open.problem*}{Open Problem}
\newtheorem*{remark*}{Remark}
\newtheorem*{example*}{Example}
\newtheorem*{exercise*}{Exercise}
\title{Reductions of Well-Ordering Principles to Combinatorial Theorems}
\author{Lorenzo Carlucci} \address[L.~Carlucci]{Department of Mathematics\\ Sapienza University of Rome\\ Rome, Italy} \email[L.~Carlucci]{lorenzo.carlucci@uniroma1.it} 
\author{Leonardo Mainardi} \address[L.~Mainardi]{Department of Computer Science\\ Sapienza University of Rome\\ Rome, Italy} \email[L.~Mainardi]{leonardo.mainardi@uniroma1.it}
\author{Konrad Zdanowski} \address[K.~Zdanowski]{Institute of Computer Science\\ Cardinal Stefan Wyszynski University\\ Warsaw, Poland} \email[K.~Zdanowski]{k.zdanowski@uksw.edu.pl}
\begin{document}

\maketitle

\begin{abstract}
A well-ordering principle is a principle of the form: If $\LoX$ is well-ordered then $\mathcal{F}(\LoX)$ is well-ordered, where $\mathcal{F}$ is some natural operator transforming linear orders into linear orders. Many important subsystems
of Second-order Arithmetic of interest in Reverse Mathematics are known to be equivalent to well-ordering principles. 

We give a unified treatment for proving lower bounds on the logical strength of various Ramsey-theoretic principles relations using characterizations of the corresponding formal systems in terms of well-ordering principles.

First we combine Girard's characterization of $\ACA_0$ by the well-ordering principle for base-$\omega$ exponentiation with a colouring used by Loebl and Ne\v{s}etril for the analysis of the Paris-Harrington principle to obtain a short combinatorial proof of $\ACA_0$ from Ramsey Theorem for triples. 

We then extend this approach to Ramsey's Theorem for all finite dimensions and $\ACA_0'$, 
using a characterization of this system 
in terms of the well-ordering preservation principle for iterated base-$\omega$ exponentiation 
due to Marcone and Montalb\'{a}n and, independently, to Afshari and Rathjen. 

We then apply this 
method to $\ACA_0^+$ and to an extension of Ramsey's Theorem for coloring relatively 
large sets due to Pudl\`ak and R\"odl and, independently, to Farmaki and Negrepontis, again using 
the characterization of $\ACA_0^+$ in terms of the well-ordering principle for the $\varepsilon$-function
due to Marcone and Montalb\'{a}n and, independently, to Afshari and Rathjen.

Finally we apply the method to Hindman's Finite Sums Theorem for sums of one or two elements
and the well-ordering-principle at the level of $\ACA_0$.

Our implications (over $\RCA_0$) from combinatorial theorems to $\ACA_0$ and $\ACA_0^+$ also establish uniform computable reductions of the corresponding well-ordering principles to the corresponding Ramsey-type theorems.
\end{abstract}

\section{Introduction}
The proof-theoretic and computability-theoretic strength of Ramsey's Theorem have been intensively 
studied. By Ramsey's Theorem for $n$-tuples and $c$ colours we here mean the assertion that every colouring of 
the $n$-tuples of $\Nat$ in $c$ colours admits an infinite monochromatic set. We refer to this 
principle by $\RT^n_c$. In the context of Reverse Mathematics \cite{Sim:99}, 
a full characterization is known for the 
Infinite Ramsey Theorem for $n$-tuples with $n\geq 3$. In particular, it is known that for every 
$n\geq 3$, $\RT^n_2$ is equivalent to $\ACA_0$ (i.e., to $\RCA_0$ augmented by the assertion that the 
Turing jump of any set exists) and that $\forall n \RT^n_2$ is equivalent to 
$\ACA_0'$ (i.e., $\RCA_0$ augmented by the assertion that for all $n$ the $n$-th Turing jump
of any set exists); see \cite{}.  

By a {\em well-ordering principle\/} (a.k.a. {\em well-ordering preservation principle}) we mean an assertion of the following form 
$$\forall \LoX (\WO(\LoX)\to \WO(\mathcal{F}(\LoX))),$$ 
where $\LoX$ is a linear order, $\WO(\LoX)$ is the $\Pi_1^1$ sentence expressing that $\LoX$ is a well-ordering, and $\mathcal{F}$ is an operator from linear orders to 
linear orders. We abbreviate this statement as $\WOP(\LoX\to \mathcal{F}(\LoX))$. 

An old result of
Girard \cite{Gir:87} (see also \cite{Hir:94} for a proof) 
shows that $\ACA_0$ is equivalent to the well-ordering principle $\WOP(\LoX\to \boldomega^\LoX)$, 
where $\boldomega^\LoX$ is the set 
$$\{ (x_0, \dots, x_k) \,:\, x_k \geq_\LoX \dots \geq_\LoX x_1 \geq_\LoX x_0 \}$$ endowed with the lexicographic ordering. 

Well-ordering principles have 
attracted new interest in recent decades. Analogues of Girard's result for systems stronger that $\ACA_0$ 
have been obtained by Afshari and Rathjen \cite{Afs-Rat:09} and by Rathjen and Weiermann \cite{Rat-Wei:11}
using proof-theoretic methods and by Marcone and Montalb\'{a}n 
\cite{Mar-Mon:11} using computability-theoretic methods. 
The systems $\ACA_0'$, $\ACA_0^+$, $\Pi_{\omega^\alpha}^0$-$\CA_0$
and $\ATR_0$ have been proved equivalent to well-ordering principles of increasing strength. 
Current research by Anton Freund and Michael Rathjen is pushing the boundaries to much higher levels of 
logical strength \cite{Rat:pre}.

In this paper we present a method for obtaining implications and reductions from Ramsey-theoretic
statements to well-ordering principles at the level of $\ACA_0$, $\ACA_0'$ and $\ACA_0^+$.
For the reader's convenience we recall the definitions of these systems: 
$$\ACA_0 = \RCA_0 + \forall X \exists Y ( Y = (X)')$$
$$\ACA_0' = \RCA_0 + \forall n\forall X \exists Y ( Y = (X)^{(n)})$$
$$\ACA_0^+ = \RCA_0 + \forall X \exists Y ( Y = (X)^{(\omega)}).$$

First we combine Girard's characterization of $\ACA_0$ and Marcone-Montalb\`an's characterization 
of $\ACA_0'$ in terms of well-ordering principles with some colourings inspired by Loebl and Ne\v{s}etril's  
beautiful combinatorial proof of the independence of Paris-Harrington Theorem from Peano Arithmetic
\cite{Loe-Nes:92} to obtain new proofs of the following results.
$$\RCA_0\vdash \RT^3_2 \to \ACA_0,$$
and
$$\RCA_0\vdash \forall n \forall c \,\RT^n_c \to \ACA_0',$$
respectively by showing that
$$\RCA_0\vdash \RT^3_2 \to \WOP(\LoX \to \boldomega^\LoX),$$
and
$$\RCA_0\vdash \forall n \forall c \,\RT^n_c \to \forall n\WOP(\LoX \to \boldomega^{\langle n, \LoX\rangle}),$$
where the latter principle is the well-ordering principle from $\LoX$ to 
the ordering obtained by applying $n$ times the $\boldomega$ operator on $\LoX$. These results already appeared in \cite{Car:12}.

Secondly, we show how to extend this approach to prove that a Ramsey-type theorem for 
bicolorings of {\em exactly large sets} (i.e., sets $S\subseteq\Nat$ such that $|S|=\min(S)+1$) 
due to Pudl\`ak-R\"odl \cite{Pud-Rod:82} and, independently, to 
Farmaki and Negrepontis (see, e.g., \cite{Far-Neg:08}), which we call the Large Ramsey Theorem and denote by 
$\RT^{!\omega}_2$.
This extension was conjectured, but not proved, in \cite{Car:12}.
The effective and proof-theoretic content of the Large Ramsey Theorem 
have been analyzed by Carlucci and Zdanowski in 
\cite{Car:12}, where it is proved to be equivalent to $\ACA_0^+$. We here 
present a completely different proof of the implication from the Large Ramsey Theorem to $\ACA_0^+$, via the well-ordering principle for the operator $\LoX\to \varepsilon_\LoX$, thus obtaining a new proof of the following result.
$$ \RCA_0 \vdash \RT^{!\omega}_2 \to \ACA_0^+,$$
by showing that 
$$\RCA_0\vdash \RT^{!\omega}_2 \to \WOP(\LoX \to \boldepsilon_\LoX),$$
where $\boldepsilon_\LoX$ denotes 
an operator that, when applied to a well ordering, behaves like the standard operator $\varepsilon$ defined on ordinals. 
The principle $\WOP(\LoX \to \boldepsilon_\LoX)$ is proved equivalent to $\ACA_0^+$ in \cite{Mar-Mon:11}, Theorem 1.7.

Finally we give a proof in a similar spirit that Hindman's Theorem restricted to unions of one or two sets
and two colorings implies and strongly computably reduces the well-ordering principle $\WOP(\LoX \to \boldomega^\LoX)$. This, besides giving a new proof of the best lower bound known for the first non-trivial restriction
of Hindman's Theorem (Proposition 3.1 in \cite{CKLZ:20}), also establishes the first direct connection between restrictions of Hindman's Theorem and principles related to transfinite well-orderings.

Besides giving new proofs of known implications, our proofs 
at the level of $\ACA_0$ and $\ACA_0^+$ give uniform computable reductions of the well-ordering principles to the corresponding Ramsey-theoretic theorems. 

We briefly recall the basic concepts concerning computable reducibility. The theorems studied in this paper are $\Pi^1_2$-principles, i.e., principles that can be written in the following form: 
$$ \forall X (I(X) \to \exists Y S(X,Y))$$
where $I(X)$ and $S(X,Y)$ are arithmetical formulas. For principles $\PP$ of this form we call any $X$ that satisfies
$I$ an {\em instance} of $\PP$ and any $Y$ that satisfies $S(X,Y)$ a {\em solution} to $\PP$ for $X$.
We will use the following notions of uniform reducibility between two $\Pi^1_2$-principles $\PP$ and $\Q$ .

\begin{enumerate}
\item $\Q$ is {\em Weihrauch reducible} to $\PP$ (denoted $\Q \leq_\W \PP$) if there exist Turing functionals $\Phi$ and $\Psi$ such 
that for every instance $X$ of $\Q$ we have that $\Phi(X)$ is an instance of $\PP$, and if $\hat{Y}$ is a solution to $\PP$ for $\Phi(X)$
then $\Psi(X\oplus \hat{Y})$ is a solution to $\Q$ for $X$.
\item $\Q$ is {\em strongly Weihrauch reducible} to $\PP$ (denoted $\Q \leq_{\sW} \PP$) if there exist Turing functionals $\Phi$ and $\Psi$ such 
that for every instance $X$ of $\Q$ we have that $\Phi(X)$ is an instance of $\PP$, and if $\hat{Y}$ is a solution to $\PP$ for $\Phi(X)$
then $\Psi(\hat{Y})$ is a solution to $\Q$ for $X$.
\end{enumerate}

We refer the reader to \cite{DDHMS:16} for background, motivation and basic properties of Weihrauch and other reductions, 
which have become of major interest in Computability Theory and Reverse Mathematics in recent years.
In the present paper we only establish positive reducibility results, indicating when the implications of type $\PP\to \Q$ over
$\RCA_0$ are witnessed by Weihrauch or strongly Weihrauch reductions.

To apply the scheme of reductions to $\WOP$s it is convenient to consider the contrapositive form, e.g., 
for $\WOP(\LoX \to \omega^\LoX)$ we consider the following implication:
$$ \forall \LoX (\neg\WO(\boldomega^\LoX) \to \neg \WO(\LoX)),$$
so that the instances of the problem are sequences witnessing that $\boldomega^\LoX$ is {\em not} well-ordered and 
solutions are sequences witnessing that $\LoX$ is not well-ordered. 

In all our examples, moreover, the solution sequence is not only computable from the instance sequence
but consists only of terms that appear as sub-terms of the instance sequence. 


\section{$\RT^3$, $\ACA_0$, and base-$\omega$ exponentiation}

We give a direct combinatorial argument showing that Ramsey's Theorem for $2$-colorings of triples implies 
$\WOP(\LoX\to \boldomega^{\LoX})$. Interestingly, the needed colourings are adapted from Loebl and Ne\v{s}etril's 
\cite{Loe-Nes:92} combinatorial proof of the unprovability of the Paris-Harrington principle
from Peano Arithmetic. Our proof establishes a strong computable reduction of the well-ordering preservation 
principle to Ramsey's Theorem. The proof optimizes in terms of number of colors the similar proof for $3$-colorings of triples by the
first and third authors in \cite{Car:12}.

For linear orders we use the same notations as in \cite{Mar-Mon:11}. In particular, we define the following 
operator $\boldomega^\LoX$ from linear orders to linear orders. Given a linear order $\LoX$, 
$\boldomega^\LoX$ is the set of finite strings $\langle x_0,x_1,\dots,x_k\rangle$ of 
elements of $\LoX$ where $x_0\geq_\LoX x_1\geq_\LoX\dots\geq_\LoX x_k$. 
When $\LoX$ is an ordinal, the intended meaning
of $\langle x_0,x_1,\dots,x_k \rangle$ is $\omega^{x_0}+\dots+\omega^{x_k}$.

The order $\leq_{\boldomega^{\LoX}}$ on $\boldomega^{\LoX}$ is the lexicographic order. Throughout the rest of the paper, for any ordering $\mathcal{Y} = (Y, \leq_{\mathcal{Y}})$, we just use the symbol $\leq$ in place of $\leq_{\mathcal{Y}}$ when there is no risk of ambiguity.

If $\alpha = \langle x_0,x_1,\dots,x_k \rangle \in \boldomega^\LoX$, we define $lh(\alpha) = k+1$ and $e_i(\alpha) = x_i$ for any $i < lh(\alpha)$; also, for any $\beta = \langle y_0,y_1,\dots,y_l \rangle \in \boldomega^\LoX$ different from $\alpha$, we denote by $\Delta(\alpha,\beta)$ the least index at which $\alpha$ and $\beta$ differ, i.e. the minimum $i$ such that $e_i(\alpha) \neq e_i(\beta)$ if $\beta$ is not an initial segment of $\alpha$ or viceversa, otherwise $\Delta(\alpha,\beta) = min(k,l)+1$. If $\alpha = \beta$, we set $\Delta(\alpha,\beta) = 0$.

\smallskip

We show the following.

\begin{theorem}\label{prop:RT3}
Over $\RCA_0$, $\RT^3_2$ implies $\forall \LoX (\WOP(\LoX \to \boldomega^\LoX))$.
Moreover, 
$$\forall \LoX (\WOP(\LoX \to \boldomega^\LoX)) \leq_\W \RT^3_2.$$
\end{theorem}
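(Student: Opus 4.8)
The plan is to argue the contrapositive of $\WOP(\LoX \to \boldomega^\LoX)$: assuming an infinite $\leq_{\boldomega^\LoX}$-descending sequence $(\alpha_n)_{n\in\Nat}$ in $\boldomega^\LoX$, I would produce an infinite $<_\LoX$-descending sequence in $\LoX$. For indices $i<j$ write $d(i,j)=\Delta(\alpha_i,\alpha_j)$ for the first position at which the two strings differ; since $\alpha_i>\alpha_j$, either $\alpha_j$ is a proper initial segment of $\alpha_i$ (and then $d(i,j)=lh(\alpha_j)$), or the two strings genuinely differ at $d(i,j)$ with $e_{d(i,j)}(\alpha_i)>_\LoX e_{d(i,j)}(\alpha_j)$. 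The single colouring I would feed to $\RT^3_2$ colours a triple $i<j<k$ according to whether $d(i,j)\le d(j,k)$ or $d(i,j)>d(j,k)$; this uses exactly two colours and is computable from $(\alpha_n)$, which will supply the functional $\Phi$ of the reduction.

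Next I would apply $\RT^3_2$ to obtain an infinite homogeneous set $H=\{h_0<h_1<\dots\}$ and argue that its colour must be the first one. Indeed, if $H$ were homogeneous for the second colour then, reading the condition off the consecutive triples $(h_m,h_{m+1},h_{m+2})$, the sequence $p_m:=d(h_m,h_{m+1})$ would be strictly $\Nat$-decreasing, which is impossible; this step needs only the least-number principle and so is available in $\RCA_0$. Hence $(p_m)_m$ is weakly increasing. Before extracting values I would dispose of the initial-segment boundary case: if some $\alpha_{h_{m+1}}$ were a proper initial segment of $\alpha_{h_m}$ then $p_m=lh(\alpha_{h_{m+1}})$, and weak monotonicity together with $p_{m+1}\le\min(lh(\alpha_{h_{m+1}}),lh(\alpha_{h_{m+2}}))$ would force $p_{m+1}=p_m=lh(\alpha_{h_{m+1}})$, i.e.\ $\alpha_{h_{m+1}}$ a proper initial segment of $\alpha_{h_{m+2}}$, contradicting $\alpha_{h_{m+1}}>\alpha_{h_{m+2}}$. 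Thus every consecutive pair of $H$ is a genuine difference, and $v_m:=e_{p_m}(\alpha_{h_{m+1}})$ is well defined.

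The technical heart is then to show that $(v_m)_m$ strictly $<_\LoX$-descends, and I would split on the two ways weak monotonicity can hold. If $p_m=p_{m+1}=:p$, then the genuine difference at the pair $(h_{m+1},h_{m+2})$ gives directly $v_m=e_p(\alpha_{h_{m+1}})>_\LoX e_p(\alpha_{h_{m+2}})=v_{m+1}$. If $p_m<p_{m+1}$, then using that the entries of a single string of $\boldomega^\LoX$ are $\geq_\LoX$-nonincreasing I obtain $v_m=e_{p_m}(\alpha_{h_{m+1}})\ge_\LoX e_{p_{m+1}}(\alpha_{h_{m+1}})$, and the genuine difference at $(h_{m+1},h_{m+2})$ gives $e_{p_{m+1}}(\alpha_{h_{m+1}})>_\LoX e_{p_{m+1}}(\alpha_{h_{m+2}})=v_{m+1}$, so again $v_m>_\LoX v_{m+1}$. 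Hence $(v_m)_m$ witnesses $\neg\WO(\LoX)$, proving $\RCA_0\vdash\RT^3_2\to\forall\LoX\,\WOP(\LoX\to\boldomega^\LoX)$; since the whole extraction is $\Delta^0_1$ in $(\alpha_n)\oplus H$, the needed sequences exist by $\Delta^0_1$-comprehension.

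For the moreover clause I would read the same construction as a Weihrauch reduction in contrapositive form. The instance is a sequence witnessing $\neg\WO(\boldomega^\LoX)$; the functional $\Phi$ sends it to the colouring above (an instance of $\RT^3_2$), and from any solution $H$ the functional $\Psi$ computes $(v_m)_m$ from $(\alpha_n)\oplus H$, a solution to $\neg\WO(\LoX)$. Because every infinite homogeneous set is necessarily of the first colour, $\Psi$ succeeds on every solution, giving $\forall\LoX\,\WOP(\LoX\to\boldomega^\LoX)\leq_\W\RT^3_2$; moreover each $v_m$ is literally an entry of some $\alpha_{h_{m+1}}$, so the solution consists only of sub-terms of the instance, as announced. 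The obstacles I anticipate are precisely the value-descent verification above and the initial-segment boundary case; by contrast the impossibility of the decreasing colour and the computability of $\Phi$ and $\Psi$ are routine.
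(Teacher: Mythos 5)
Your proposal is correct and follows essentially the same route as the paper: the identical two-colour comparison of $\Delta(\alpha_i,\alpha_j)$ with $\Delta(\alpha_j,\alpha_k)$, exclusion of the strictly-decreasing colour via $\WO(\omega)$, and extraction of the descending $\LoX$-sequence from the exponents at the first-difference positions along consecutive pairs of the homogeneous set (you take the exponent from the second element of each pair rather than the first, which is an immaterial shift). Your explicit treatment of the case where one term is a proper initial segment of its predecessor is a detail the paper's proof passes over silently, but it does not change the argument.
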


\begin{proof}
Assume $\RT^3_2$ and, by way of contradiction, suppose $\neg \WO(\boldomega^\LoX)$. We show $\neg \WO(\LoX)$. 
We define a $\sigma$-computable colouring $C^{(\sigma)}:[\Nat]^3\to 2$ with an explicit sequence parameter
$\sigma$ of intended type $\sigma:\Nat\to field(\boldomega^\LoX)$ as follows: 
$$C^{(\sigma)}(i,j,k)=
\begin{cases}
0 & \mbox{if } \Delta(\sigma_i,\sigma_j) > \Delta(\sigma_j,\sigma_k),\\
1 & \mbox{otherwise.} 
\end{cases}
$$
Let $\alpha:\Nat\to field(\boldomega^\LoX)$ be an infinite descending sequence
in $\boldomega^\LoX$. 
Let $H$ be an infinite $C^{(\alpha)}$-homogeneous set. 
Consider $(\beta_i)_{i\in \Nat}$, where $\beta_i = \alpha_{h_i}$ and $H=\{h_0 < h_1 < \dots\}$. We reason by cases.

Case 1. The colour of $C^{(\alpha)}$ on $[H]^3$ is $0$. Then 
$$ \Delta(\beta_0,\beta_1) > \Delta(\beta_1,\beta_2) > \dots$$
Contradiction, since $\Delta(\beta_i,\beta_{i+1})\in \Nat$.


Case 2. The colour of $C^{(\alpha)}$ on $[H]^3$ is $1$. 

If $\Delta(\beta_i,\beta_j)=\Delta(\beta_j,\beta_k)$ then $e_{\Delta(\beta_i,\beta_j)}(\beta_i)> e_{\Delta(\beta_j,\beta_k)}(\beta_j)$, since 
$\beta_i>\beta_j$. 

If $\Delta(\beta_i,\beta_j) < \Delta(\beta_j,\beta_k)$ then 
$e_{\Delta(\beta_i,\beta_j)}(\beta_i) > e_{\Delta(\beta_j,\beta_k)}(\beta_j)$,
since $\beta_i > \beta_j > \beta_k$. 

Thus, in any case
$$ e_{\Delta(\beta_0,\beta_1)}(\beta_0) > e_{\Delta(\beta_1,\beta_2)}(\beta_1) > \dots.$$

In other words, $\alpha':\Nat\to \LoX$ defined by 
$$ i \mapsto e_{\Delta(\alpha_{h_i},\alpha_{h_{i+1}})}(\alpha_{h_i}),$$
is an infinite descending sequence in $\LoX$. 
\end{proof}

We have the following immediate corollary, from Theorem \ref{prop:RT3} and the fact that $\WOP(\LoX \to \boldomega^\LoX)$ implies $\ACA_0$, see \cite{Gir:87, Hir:94}.

\begin{corollary}
Over $\RCA_0$, $\RT^3_2$ implies $\ACA_0$.
\end{corollary}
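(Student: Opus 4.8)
The plan is to derive the corollary by transitivity from two implications that both hold over $\RCA_0$. The first is precisely Theorem~\ref{prop:RT3}, established above: over $\RCA_0$, $\RT^3_2$ implies $\forall \LoX\,(\WOP(\LoX \to \boldomega^\LoX))$. The second is the nontrivial direction of Girard's characterization of $\ACA_0$, namely that $\forall \LoX\,(\WOP(\LoX \to \boldomega^\LoX))$ implies $\ACA_0$ over $\RCA_0$, for which I would simply invoke \cite{Gir:87, Hir:94}. Composing the two yields $\RCA_0 \vdash \RT^3_2 \to \ACA_0$.

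More explicitly, I would argue inside $\RCA_0$: assuming $\RT^3_2$, Theorem~\ref{prop:RT3} delivers the well-ordering principle $\WOP(\LoX \to \boldomega^\LoX)$ for every linear order $\LoX$, and Girard's result then converts this principle into $\ACA_0$. Since each of the two implications is provable over $\RCA_0$, so is their composite. There is no genuine obstacle at this stage: all of the combinatorial content has already been discharged in the proof of Theorem~\ref{prop:RT3}, and the one remaining ingredient is the known equivalence between the base-$\omega$ exponentiation well-ordering principle and $\ACA_0$. The only point worth a word of care is that one must apply Girard's direction as an implication over the base theory rather than merely as a statement over $\omega$-models, so that the composition takes place uniformly inside $\RCA_0$; this is exactly the form in which the cited results are stated.
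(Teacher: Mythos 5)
Your proposal is correct and is essentially identical to the paper's own argument: the corollary is obtained by composing Theorem~\ref{prop:RT3} with the known fact (due to Girard, see also Hirst) that $\WOP(\LoX \to \boldomega^\LoX)$ implies $\ACA_0$ over $\RCA_0$. Nothing further is needed.
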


\section{$\forall n \RT^n$, $\ACA_0'$, and iterated base-$\omega$ exponentiation}

We generalize the result from the previous section to Ramsey's Theorem with internal universal quantification 
over all dimensions. The proof below is different from the one in \cite{Car:12} and allows to read-off a uniform computable reduction.

Given a linear ordering $\LoX$, we define $\boldomega^{\langle 0, \LoX\rangle} = \LoX$, and $\boldomega^{\langle n+1,\LoX\rangle}=\boldomega^{\boldomega^{\langle n, \LoX\rangle}}$.

In the Weihrauch reduction stated in the next theorem, the instances of the problem 
$\forall h \forall \LoX (\WOP(\LoX \rightarrow \boldomega^{\langle h,\LoX \rangle}))$ are pairs $(h,\alpha)$ 
with $h\geq 2$ and $\alpha$ an infinite decreasing sequence in the linear order $\boldomega^{\langle h, \LoX\rangle}$.

\begin{theorem}\label{prop:RTn}
Over $\RCA_0$, $\forall n \forall c \RT^n_c$ implies $\forall h \forall \LoX (\WOP(\LoX \rightarrow \boldomega^{\langle h,\LoX \rangle}))$. Moreover, 
$$\forall h \forall \LoX (\WOP(\LoX \rightarrow \boldomega^{\langle h,\LoX \rangle})) \leq_\W \forall n \forall c \RT^n_c.$$

\begin{proof}
The case $h=0$ is trivial, while the case $h=1$ holds by Theorem \ref{prop:RT3}. 
So, by way of contradiction, let $\alpha : \Nat \rightarrow field(\boldomega^{\langle h, \LoX \rangle})$ be an infinite descending sequence in $\boldomega^{\langle h, \LoX \rangle}$, for some $h \!\geq\! 2$ and for some well ordering $\LoX$. We show how to construct an infinite descending sequence in $\LoX$, hence contradicting $\WO(\LoX)$.

First, let $\sigma: \Nat \to field(\boldomega^{\langle h, \LoX \rangle}) \cup \{\additionalsymbol\}$ and let us denote by $\sigma_j^{(n),I}$, with $I = \{i_0 < \ldots < i_k\}$, $k \geq 1$, $n \leq k$ and $j \in \Nat$, the result of the process of extracting the ``$n$-th comparing exponent'' of $\sigma_j$ using indexes in $I$, i.e. $\sigma_j^{(0),I} = \sigma_j$ and for $m<n$:
\medskip
\begin{equation*}
\begin{split}
&\sigma_j^{(m+1),I} =
\begin{cases}
e_{\Delta(\sigma_j^{(m),I},\,\sigma_{succ_I(j)}^{(m),I})}\big(\sigma_j^{(m),I}\big)& \text{if }j \in\{i_0,\ldots,i_{k-m-1}\} \text{ and } \\& e_{\Delta(\sigma_j^{(m),I},\sigma_{succ_I(j)}^{(m),I})}\big(\sigma_j^{(m),I}\big) \text{ exists,}\\[16pt]
\additionalsymbol& \text{otherwise,}
\end{cases}
\end{split}
\end{equation*}

\smallskip

where $succ_I(j)$ is the least element in $I$ greater than $j$. 
We then denote by $\sigma^{(n),I}$ the sequence $\langle \sigma_j^{(n),I} \ |\ j \in \Nat \rangle$.

\medskip

Now, let $C_1^{(\sigma)}: [\Nat]^3 \rightarrow 3$ be the same colouring defined in the proof of Theorem \ref{prop:RT3}, with the additional property that $C_1^{(\sigma)}(i,j,k) = \additionalsymbol$ if at least one out of $\sigma_i, \sigma_j, \sigma_k$ is $\additionalsymbol$.
Also, if $I = \{i_0 < \ldots < i_k\}$ with $k \geq 3$, for any $j=0,\ldots,k-3$, let:
$$
v_j^{(\sigma), I} = (C_1^{(\sigma^{(j),I})}(i_0, i_1, i_2), C_1^{(\sigma^{(j),I})}(i_1, i_2, i_3), \ldots, C_1^{(\sigma^{(j),I})}(i_{k-j-3}, i_{k-j-2}, i_{k-j-1})),
$$

and
$$
w_j^{(\sigma), I} = (C_1^{(\sigma^{(j),I})}(i_1, i_2, i_3), C_1^{(\sigma^{(j),I})}(i_2, i_3, i_4), \ldots, C_1^{(\sigma^{(j),I})}(i_{k-j-2}, i_{k-j-1}, i_{k-j})).
$$


\bigskip

Finally, we can define a colouring $C_h^{(\sigma)} : [\Nat]^{h+2} \rightarrow d(h)$, where $d$ is a primitive recursive function to be read off from the proof.
For $I = \{i_0< \ldots< i_{h+1}\}$, let us define:

\begin{equation*}
C_h^{(\sigma)}(I) =
\begin{cases}
\big(v_0^{(\sigma), I},\ w_0^{(\sigma), I}\big)& \text{if } \neg\big(v_0^{(\sigma), I} = w_0^{(\sigma), I} = (1,\ldots,1)\big)\\[8pt]
\big(v_1^{(\sigma), I},\ w_1^{(\sigma), I}\big)& \text{if } \neg\big(v_1^{(\sigma), I} = w_1^{(\sigma), I} = (1,\ldots,1)\big)\\[8pt]
\ldots&\ldots\\[8pt]
\big(v_{h-2}^{(\sigma), I},\ w_{h-2}^{(\sigma), I}\big)& \text{if } \neg\big(v_{h-2}^{(\sigma), I} = w_{h-2}^{(\sigma), I} = 1\big)\\[8pt]
C_1^{(\sigma^{(h-1),I})}(i_0, i_1, i_2)& \text{otherwise}
\end{cases}
\end{equation*}

where each case of $C_h^{(\sigma)}(I)$ is defined assuming that the conditions describing the previous cases do not hold.

By $\RT^{h+2}_{d(h)}$, let $H$ be an infinite $C_h^{(\alpha)}$-homogeneous set. 
We show how to compute an $\LoX$-descending sequence given $\alpha$ and $H$. Let $\{s_0, s_1, \ldots\}$ be an enumeration of $H$ in increasing order.
We first show that the colour of $H$ must be $2$. To exclude the other cases, we argue as follows. 
Let $I = \{s_{i_0} < \ldots < s_{i_{h+1}}\} \in [H]^{h+2}$, $J = \{s_{i_1} < \ldots < s_{i_{h+2}}\} \in [H]^{h+2}$ and let $(v,w)$ be the colour of $H$, where $v = (v_0,\ldots,v_l)$ and $w = (w_0,\ldots, w_l)$. Also, let $l' = h-l$. Informally, $l'$ is the number of times we ``extract'' the exponent of the elements of any $(h+2)$-tuple in $[H]^{h+2}$ in order to get the colour of the tuple, i.e. $(v,w)$.

\smallskip
 
\emph{Case 1.} The colour is $(v,w)$, with $v \neq w$. This is easily seen to be impossible, since $C_h^{(\alpha)}(I)=(v,w)$ and, by definition of $J$, the first component of the colour of $J$ is exactly $w$. But then $v = w$ should hold by homogeneity.

\smallskip
 
\emph{Case 2.} The colour is $(v, w)$ for some $v = w \neq (1, \ldots, 1)$. We have three subcases.

\smallskip

\emph{Case 2.1.} $v \neq (t, \ldots, t)$, with $t \in \{\additionalsymbol, 0, 1\}$. Then, by hypothesis, we have $v_0=w_0, v_1=w_1, \ldots, v_l=w_l$ and, by definition of the vectors $v$ and $w$, we also have $w_0=v_1, w_1=v_2, \ldots, w_{l-1}=v_l$. Thus we have $v_0 = v_1 = \cdots = v_l$. Contradiction.

\smallskip
 
\emph{Case 2.2.} $v = (\additionalsymbol, \ldots, \additionalsymbol)$. Then $v_{j}^{(\alpha), I} = (\additionalsymbol, \ldots, \additionalsymbol)$ for some $j \leq h-2$. However, no components of $v_{j}^{(\alpha), I}$ can be $\additionalsymbol$, since $\alpha$ is a total strictly descending sequence and $v_{j'}^{(\alpha), I}\! = (1,\ldots,1)$ for any $j' < j$. So this case can not occur.

\smallskip
 
\emph{Case 2.3.} $v = (0, \ldots, 0)$. Then the sequence $$
\big\langle \Delta\big(\alpha^{(l'),\{s_n,\ldots,s_{n+l'}\}}_{s_n}, \alpha^{(l'),\{s_{n+1},\ldots,s_{n+l'+1}\}}_{s_{n+1}}\big) \ \big|\ n \in \Nat \big\rangle
$$
is strictly descending in $\Nat$, contradicting $\WO(\omega)$.

\smallskip
 
Since Case 1 and Case 2 cannot occur, for any $I = \{i_0 < \ldots < i_{h+1}\} \subset H$ we have 
$$C_h^{(\alpha)}(I) = C_1^{(\alpha^{(h-1),I})}(i_0, i_1, i_2) \in \{\additionalsymbol, 0,1\}.$$ 
We can discard colours $\additionalsymbol$ and 0 as in Cases 2.2 and 2.3, so the only possible colour for $H$ is 1.
Therefore, we can construct the sequence $\big\langle \alpha^{(h),\{s_n,\ldots,s_{n+h}\}}_{s_n} \ |\ n \in \Nat \big\rangle$, which is an infinite descending sequence in $\LoX$.
\end{proof}
\end{theorem}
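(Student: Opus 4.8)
The plan is to bootstrap the single-step result of Theorem~\ref{prop:RT3} across the $h$ nested layers of the iterated exponential. The cases $h=0$ and $h=1$ are immediate (the latter is Theorem~\ref{prop:RT3} itself), so fix $h\ge 2$ and an infinite descending sequence $\alpha$ in $\boldomega^{\langle h,\LoX\rangle}$. Since $\boldomega^{\langle h,\LoX\rangle}=\boldomega^{\boldomega^{\langle h-1,\LoX\rangle}}$, applying Theorem~\ref{prop:RT3} with base order $\boldomega^{\langle h-1,\LoX\rangle}$ already converts $\alpha$ into a descending sequence one level down, and iterating $h$ times would reach $\LoX$ and prove the implication over $\RCA_0$. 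The obstruction is that this iteration makes $h$ separate appeals to $\RT^3_2$, whereas a Weihrauch reduction may issue only a single call to an instance of $\forall n\forall c\,\RT^n_c$. The real task is therefore to compress the $h$ nested applications into one colouring of $(h+2)$-tuples whose homogeneous sets realise the good case (colour $1$ in the sense of Theorem~\ref{prop:RT3}) at every layer simultaneously.

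First I would introduce the layer-peeling operator $\sigma_j^{(m),I}$ that replaces $\sigma_j$ by its $\Delta$-comparing exponent against its $I$-successor, just as in the single-step proof, and for each layer $j$ record the two overlapping vectors $v_j^{(\sigma),I}$ and $w_j^{(\sigma),I}$ collecting the $C_1$-colours of consecutive triples after $j$ extractions. The combined colouring $C_h^{(\sigma)}$ on $[\Nat]^{h+2}$ then scans the layers $j=0,\dots,h-2$ and outputs $(v_j,w_j)$ at the first layer where these vectors are not both all-$1$; if every such layer is all-$1$, it falls through and outputs the single value $C_1^{(\sigma^{(h-1),I})}(i_0,i_1,i_2)$. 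The design principle is that the colour reports the \emph{first} layer at which consecutive triples fail to be uniformly in the good case, so that homogeneity can pin down one behaviour per layer; being all-$1$ at every layer below the reported one is exactly what certifies that the iterated extraction has so far produced genuine strictly descending elements one level down, with no $\additionalsymbol$ intruding.

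The core of the argument is the homogeneity analysis on an infinite $C_h^{(\alpha)}$-homogeneous set $H=\{s_0<s_1<\dots\}$. A colour $(v,w)$ with $v\ne w$ is ruled out by a shift argument: comparing a block $I$ with the shifted block obtained by dropping its least element and appending the next point of $H$ identifies the $w$-vector of $I$ with the $v$-vector of the shift, so homogeneity forces $v=w$. For a constant colour $v=w$, the overlap relations $w_i=v_{i+1}$ telescope to make all entries of $v$ coincide, whence $v=(t,\dots,t)$ for a single $t\in\{\additionalsymbol,0,1\}$; here $t=\additionalsymbol$ is excluded because $\alpha$ is total and (given all-$1$ below) never produces $\additionalsymbol$, $t=0$ is excluded because it would yield an infinite $\Nat$-descending sequence of $\Delta$-values against $\WO(\omega)$, and $t=1$ contradicts the defining hypothesis of that case. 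Hence $H$ must land in the fall-through case, whose colour lies in $\{\additionalsymbol,0,1\}$; the same two exclusions force it to be $1$, and from this I read off $\langle \alpha^{(h),\{s_n,\dots,s_{n+h}\}}_{s_n}\mid n\in\Nat\rangle$, a strictly descending sequence in $\LoX$ contradicting $\WO(\LoX)$.

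The step I expect to be the main obstacle is the coherence of the layered extraction under homogeneity: one must verify that being all-$1$ at every earlier layer — together with totality of $\alpha$ — propagates strict descent downward, so that each $\sigma_j^{(m),I}$ genuinely exists, no $\additionalsymbol$ is ever manufactured, and the bottom extraction indeed lands in $\LoX$ as a strictly descending sequence. The shift identity and the telescoping are the two combinatorial facts that drive this; once they are in place the colour count $d(h)$ is visibly primitive recursive in $h$ and the construction is uniform in $(\alpha,H)$, delivering the claimed Weihrauch reduction.
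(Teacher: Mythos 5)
Your proposal follows essentially the same route as the paper's proof: the same layer-peeling operator $\sigma_j^{(m),I}$, the same overlapping vectors $v_j, w_j$ and first-bad-layer colouring $C_h^{(\sigma)}$ on $[\Nat]^{h+2}$, and the same homogeneity analysis (shift argument forcing $v=w$, telescoping forcing a constant vector, then exclusion of $\additionalsymbol$ and $0$) leading to the descending sequence $\big\langle \alpha^{(h),\{s_n,\dots,s_{n+h}\}}_{s_n}\mid n\in\Nat\big\rangle$ in $\LoX$. The argument is correct and matches the paper's in all essentials.
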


The following corollary is immediate from Theorem \ref{prop:RTn} and Theorem 1.5 in \cite{Mar-Mon:11}. 

\begin{corollary}
Over $\RCA_0$, $\forall n\forall k\RT^n_k$ implies $\ACA_0'$.
\end{corollary}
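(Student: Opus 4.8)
The plan is to obtain the corollary as a composition of two implications over $\RCA_0$, with essentially no new work required beyond what has already been established. The first and substantive ingredient is Theorem \ref{prop:RTn}, which I would invoke directly: it gives that $\forall n \forall c\, \RT^n_c$ implies the iterated well-ordering preservation principle $\forall h \forall \LoX (\WOP(\LoX \rightarrow \boldomega^{\langle h,\LoX \rangle}))$. This is the combinatorial heart of the argument, and it is already proved above. The remaining task is therefore purely to identify this well-ordering principle with the known proof-theoretic characterization of $\ACA_0'$.

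The second ingredient is the Marcone–Montalbán characterization, namely Theorem 1.5 in \cite{Mar-Mon:11}, which asserts that over $\RCA_0$ the well-ordering preservation principle for iterated base-$\omega$ exponentiation, $\forall h \forall \LoX (\WOP(\LoX \rightarrow \boldomega^{\langle h,\LoX \rangle}))$, is equivalent to $\ACA_0'$. I would use only the forward direction of this equivalence, which yields $\ACA_0'$ from the principle. Chaining the two implications over $\RCA_0$ then produces $\forall n \forall c\, \RT^n_c \to \ACA_0'$, which is exactly the statement to be proved.

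The only point that calls for care — and the main, albeit minor, obstacle — is confirming that the formulation of the iterated $\boldomega$ principle used in Theorem \ref{prop:RTn} matches precisely the principle whose equivalence to $\ACA_0'$ is asserted in \cite{Mar-Mon:11}. Here the operator is defined by $\boldomega^{\langle 0, \LoX\rangle} = \LoX$ and $\boldomega^{\langle n+1,\LoX\rangle}=\boldomega^{\boldomega^{\langle n, \LoX\rangle}}$, with the iteration count $h$ quantified externally over $\Nat$. Since both references describe the well-ordering preservation principle for $n$-fold base-$\omega$ exponentiation with $n$ ranging over the naturals, this identification is immediate, and no further argument is needed to complete the proof.
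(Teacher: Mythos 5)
Your proposal is correct and matches the paper's own proof exactly: the paper derives the corollary as immediate from Theorem \ref{prop:RTn} together with Theorem 1.5 of \cite{Mar-Mon:11}, which is precisely your two-step composition. Your extra remark about checking that the formulation of the iterated $\boldomega$ principle agrees with the one in \cite{Mar-Mon:11} is a reasonable sanity check but requires no further argument, as you note.
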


\section{Large Ramsey's Theorem, $\ACA_0^+$ and the $\varepsilon$-ordering}

We apply the proof-technique from the previous sections to a natural extension of Ramsey's Theorem 
for colourings of a particular family of finite sets of unbounded size. 

By slightly redefining the notion of largeness given by Paris and Harrington in \cite{Har-Par:77}, we call a set $X$ {\em exactly large} if $\card(X) = \min(X)+3$, and denote by $[\Nat]^{!\omega}$ the collection of exactly large sets $X \subseteq \Nat$.
Then, we can state the following principle, which is obtained by just applying our definition of exactly large sets to a well-known result by Pudl\`ak-R\"odl \cite{Pud-Rod:82} and Farmaki \cite{Far-Neg:08}.

\begin{theorem} \label{thm:rtomega}
For every infinite subset $M$ of $\Nat$ and 
for every coloring $C$ of the exactly large 
subsets of $\Nat$ in two colours, there exists an infinite 
set $L\subseteq M$ such that every exactly large subset
of $L$ gets the same color by $C$.
\end{theorem}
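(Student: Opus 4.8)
The plan is to build the homogeneous set recursively, treating each possible value of the minimum separately and then unifying the finitely many resulting colours by a single pigeonhole at the end. The crucial observation is that the exactly large sets form a \emph{barrier}: once the minimum $m$ of an exactly large set is fixed, the set is determined by adjoining to $\{m\}$ an arbitrary $(m+2)$-element set of larger numbers, since $\card(X) = \min(X)+3$ forces an exactly large set with minimum $m$ to have exactly $m+3$ elements. Thus the ``slice'' of the colouring at a fixed minimum $m$ is an ordinary $2$-colouring of $(m+2)$-element sets, on which the usual infinite Ramsey theorem $\RT^{m+2}_2$ applies.

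Concretely, I would construct a decreasing chain of infinite subsets $M = N_0 \supseteq N_1 \supseteq \cdots$ of $M$, together with the elements $l_0 < l_1 < \cdots$ of the eventual homogeneous set. At stage $i$, with the reservoir $N_i \subseteq M$ already chosen, I put $l_i = \min(N_i)$ and consider the $2$-colouring $Y \mapsto C(\{l_i\}\cup Y)$ of the $(l_i+2)$-element subsets $Y$ of $N_i \setminus \{l_i\}$; this is legitimate because $\{l_i\}\cup Y$ is then exactly large with minimum $l_i$. Applying $\RT^{l_i+2}_2$ yields an infinite homogeneous $N_{i+1} \subseteq N_i \setminus \{l_i\}$ of some colour $c_i$. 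By construction, every exactly large set whose minimum is $l_i$ and whose remaining elements all lie in $N_{i+1}$ receives colour $c_i$; and for $i < j$ one has $l_j \in N_{i+1}$, so every exactly large subset of $L_0 = \{l_0 < l_1 < \cdots\}$ with minimum $l_i$ gets colour exactly $c_i$.

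It remains to make the colours $c_i$ agree. Since there are only two colours, the infinite pigeonhole ($\RT^1_2$) gives an infinite set of indices $S$ on which $c_i$ is constant, say equal to $c$; setting $L = \{\, l_i : i \in S \,\}$ I obtain an infinite $L \subseteq M$ every exactly large subset of which has colour $c$, which is the desired conclusion. Passing from $L_0$ to $L$ is harmless, because an exactly large subset of $L$ with minimum $l_i$ still has all its non-minimal elements inside $N_{i+1}$.

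The main obstacle is the \emph{unbounded, minimum-dependent dimension}: no single finite-exponent Ramsey theorem can colour all the exactly large sets at once, so homogeneity has to be secured minimum by minimum and the partial results glued together. The device that makes this possible is the nested-reservoir diagonalisation, which is precisely the ingredient whose iteration accounts for the theorem living at the level of $\ACA_0^+$ rather than at the level of the individual $\RT^n_2$. Classically, forming the diagonal sequence $L_0$ from the chain $(N_i)$ is unproblematic, but it is this fusion step — and not any single Ramsey application — that carries the real strength, and the care required to keep every stage inside $M$ is the only genuinely delicate bookkeeping.
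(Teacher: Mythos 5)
Your proof is correct, and it is worth noting at the outset that the paper does not actually prove Theorem~\ref{thm:rtomega} from first principles: it obtains the statement by observing that it follows (over $\RCA_0$, and via a strong Weihrauch reduction in the other direction) from the already-known partition theorem of Pudl\`ak--R\"odl and Farmaki--Negrepontis, and then imports the $\ACA_0^+$ analysis from \cite{Car-Zda:RT}. What you have written is instead the standard direct proof of that underlying theorem, adapted to the paper's convention $\card(X)=\min(X)+3$: stratify the exactly large sets by their minimum $m$, observe that the slice at minimum $m$ is an ordinary $2$-colouring of $(m+2)$-sets, thin the reservoir by $\RT^{l_i+2}_2$ at each stage, diagonalize to get $L_0=\{l_0<l_1<\cdots\}$ with a well-defined colour $c_i$ attached to each minimum $l_i$, and finish with one application of the infinite pigeonhole. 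All the verifications you need go through: $\{l_i\}\cup Y$ is exactly large with minimum $l_i$ whenever $Y$ is an $(l_i+2)$-subset of $N_i\setminus\{l_i\}$; for $i<j$ one indeed has $l_j\in N_j\subseteq N_{i+1}$; and passing from $L_0$ to $L$ preserves homogeneity because the non-minimal elements of an exactly large subset of $L$ with minimum $l_i$ still lie in $N_{i+1}$. Your closing remark is also consonant with the paper's calibration: it is precisely the need to iterate $\RT^{m+2}_2$ for unboundedly many exponents $m$ and then fuse the results that places the principle at the level of $\ACA_0^+$ rather than at the level of any single $\RT^n_2$, which is exactly what the equivalence proved in \cite{Car-Zda:RT} confirms. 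In short, you supply a self-contained combinatorial proof where the paper supplies a citation; the paper's route buys brevity and the transfer of the reverse-mathematical analysis, while yours buys transparency about where the logical strength enters.
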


We refer to the statement of the above Theorem as $\RT^{!\omega}_2$. It is easy to prove that Theorem \ref{thm:rtomega} is implied (over $\RCA_0$) by the original principle due to Pudl\`ak, R\"odl and Farmaki, and that the latter principle reduces (via a strong Weihrauch reduction) our version of $\RT^{!\omega}_2$. Hence, the analysis carried out in \cite{Car-Zda:RT} also applies to Theorem \ref{thm:rtomega}, which is therefore equivalent to $\ACA_0^+$ over $\RCA_0$. 

Here, we apply the technique from the previous sections to 
give a direct proof of $\RCA_0 \vdash \RT^{!\omega}_2 \to \ACA_0^+$, that also witnesses a Weihrauch reduction. 
The proof hinges on the results from the previous sections (Theorems \ref{prop:RT3} and 
\ref{prop:RTn}) and extends the same pattern of arguments to more complex linear orderings. 
Namely, we use $\RT^{!\omega}_2$ to derive $\forall \LoX (\WOP(\LoX\to \boldepsilon_\LoX))$, which in turn has been proved to be equivalent to $\ACA_0^+$ over $\RCA_0$ by Marcone and Montalb\`an (\cite{Mar-Mon:11}, Theorem 1.7).

We adopt the same notation defined in Definition 2.3 in \cite{Mar-Mon:11} and assume all the terms to be written in normal form. For any $\gamma \in \boldepsilon_\LoX$, let $lh(\gamma)$ be the number of terms of $\gamma$ and let $\gamma_n$ be the $n$-th term of $\gamma$ if $n \in [0,lh(\gamma))$, otherwise $\gamma_n = 0$. Then, we denote by $e_n(\gamma)$ the exponent of $\gamma_n$ if such exponent does exist, otherwise we set $e_n(\alpha) = 0$. Also, if $\delta \in \boldepsilon_\LoX$, we indicate by $\Delta(\gamma,\delta)$ the index of the first term at which $\gamma$ and $\delta$ differs -- or 0 if $\gamma = \delta$.
Finally, notice that $\boldomega$ and $\boldepsilon$ operators are compatible to each other in the sense of Lemma 2.6 in \cite{Mar-Mon:11}, so we can refer to results from the previous sections while dealing with $\boldepsilon$.

\begin{theorem}\label{thm:RTomega}
\ Over $\RCA_0$, $\RT^{!\omega}_2$ implies $\forall \LoX (\WOP(\LoX \rightarrow \boldepsilon_\LoX))$. Moreover, 
$$\forall \LoX (\WOP(\LoX \rightarrow \boldepsilon_\LoX)) \leq_\W \RT^{!\omega}_2.$$

\begin{proof}
Suppose $\WO(\LoX)$ but $\neg\WO(\boldepsilon_\LoX)$. Without loss of generality, we assume $0 \in \LoX$. We define a colouring $C_1^{(\sigma)} : [\Nat]^3 \rightarrow 6$ with an explicit sequence parameter $\sigma$ of intended type $\sigma : \Nat \rightarrow field(\boldepsilon_\LoX) \cup \{\additionalsymbol\}$.  Each case of $C_1^{(\sigma)}$ is defined assuming that the conditions describing the previous cases do not hold.

\begin{equation*}
    C_1^{(\sigma)}(i,j,k) = 
    \begin{cases}
    \additionalsymbol   &\text{if }\sigma_i=\,\additionalsymbol \,\lor\ \sigma_j=\,\additionalsymbol \,\lor\ \sigma_k=\,\additionalsymbol\\
    0   &\text{if }\sigma_i < \varepsilon_0\\
    1   &\text{if }\Delta(\sigma_i,\sigma_j) > \Delta(\sigma_j,\sigma_k)\\
    2   &\text{if } b_{\Delta(\sigma_i,\sigma_j)}(\sigma_i) >_\LoX b_{\Delta(\sigma_j,\sigma_k)}(\sigma_j)\\
    3   &\text{if } ht_{\Delta(\sigma_i,\sigma_j)}(\sigma_i) > ht_{\Delta(\sigma_j,\sigma_k)}(\sigma_j)\\
    4   &\text{otherwise}
    \end{cases}
\end{equation*}

where, for $\gamma \in \boldepsilon_\LoX$ and $n \in \Nat$, $b_n(\gamma) = min\{x \in \LoX\ |\ \gamma_n < \varepsilon_{x+1}\}$, while $ht_n(\gamma) = 0$ if $\gamma_n < \varepsilon_0$ or $\gamma_n = \varepsilon_{b_n(\gamma)}$, otherwise $ht_n(\gamma) = 1 + ht_n(\gamma')$, with $\gamma_n = \omega^{\gamma'} \neq \varepsilon_{b_n(\gamma)}$. Informally, when $\gamma_n \geq \varepsilon_0$, $b_n(\gamma)$ is the largest $x \in \LoX$ such that $\varepsilon_x$ appears in $\gamma_n$, while $ht_n(\gamma)$ is the maximum height at which $\varepsilon_{b_n(\gamma)}$ appears in $\gamma_n$.

\medskip

For any $h \geq 2$, we define $C_h^{(\sigma)} : [\Nat]^{h+2} \rightarrow d(h)$ as in Theorem \ref{prop:RTn}. Notice that, in this case, we use $C_1^{(\sigma)}$ as defined above, hence the condition $\neg\big(v_0^{(\sigma), I} = w_0^{(\sigma), I} = (1,\ldots,1)\big)$ in the definition of $C_h^{(\sigma)}$ must be replaced with $\neg\big(v_0^{(\sigma), I} = w_0^{(\sigma), I} = (4,\ldots,4)\big)$.

\medskip

Lastly, we define a colouring $C^{(\sigma)} : [\Nat]^{!\omega} \rightarrow 2$. 

$C^{(\sigma)}$ is defined as follows:
\begin{equation*}
C^{(\sigma)}(i_0, \ldots, i_{i_0+2}) = 
\begin{cases}
0&\text{if }C_{i_0}^{(\sigma)}(i_1, \ldots, i_{i_0+2}) = 4\\
1&\text{otherwise}

\end{cases}    
\end{equation*}

\smallskip
Now we are ready to prove that, by assuming $\neg\WO(\boldepsilon_\LoX)$, we can construct an infinite descending sequence in $\LoX$, thus contradicting $\WO(\LoX)$.

Let $\alpha : \Nat \rightarrow field(\boldepsilon_\LoX)$ be an infinite descending sequence in $\boldepsilon_\LoX$ and, by $\RT^{!\omega}_2$, let $H = \{h_0 < h_1 < \ldots\}$ be an infinite $C^{(\alpha)}$-homogeneous set. Here, by $\RT^{!\omega}_2$ we mean the Ramsey Theorem applied to our definition of $[\Nat]^{!\omega}$: however, this version of $\RT^{!\omega}_2$ is clearly equivalent -- over $\RCA_0$ -- to the original version.



First, notice that the $C^{(\alpha)}$-colour of $H$ is 0, otherwise, for any choice of a positive $h \in H$, we could colour $H \setminus [0, h]$ using $C_h^{(\alpha)}$ and, by $\RT^{h+2}$, we would obtain an infinite $C_h^{(\alpha)}$-homogeneous set whose colour is different from 4, hence contradicting the proof of Theorem \ref{prop:RTn}, or rather its version adapted in order to manage the three additional colours of the base colouring $C_1^{(\alpha)}$. We can apply $\RT^{h+2}$ since it is implied by $\RT^{!\omega}_2$.

Now, we slightly redefine the notation used in Theorem \ref{prop:RTn}. Let us denote by $\alpha_i^{(n),H}$, with $i \in H \setminus \{h_0\}$ and $n \leq prec_H(i) = max\{h \in H\ |\ h < i\}$, the result of the process of extracting the ``$n$-th comparing exponent'' of $\alpha_i$ using indexes in $H$, i.e. for any $m<n$:
\begin{equation*}
    \alpha_i^{(0),H} = \alpha_i
\end{equation*}
\begin{equation*}
    \alpha_i^{(m+1),H} = e_{\Delta(\alpha_i^{(m),H},\alpha_{succ_H(i)}^{(m),H})}\big(\alpha_i^{(m),H}\big)
\end{equation*}

Each term $\alpha_i^{(n),H}$ is well-defined, since for any choice of $j_0 < j_1 < \ldots < j_{prec_H(i)+1}$ we have $$C_{prec_H(i)}^{(\alpha)}(i, h_{j_0}, \ldots, h_{j_{prec_H(i)+1}})\!=\!4$$ that entails $C_n^{(\alpha)}(i, h_{j_0}, \ldots, h_{j_{n+1}})\!=\!4$ for any $0 < n \leq prec_H(i)$, which in turn implies that the comparing exponent in the definition of $\alpha_i^{(m+1),H}$ does exist.

\medskip

Using this notation, we define a sequence $\tau : \Nat \rightarrow field(\LoX)$ as follows:
\begin{equation*}
    \tau_i = b_0\big(\alpha_{n_i}^{(t_i),H}\big)
\end{equation*}

for any $i \geq 0$, where $t_0=h_0$, $n_0=h_1$ and, for any $j \geq 0$,
\begin{equation*}
    t_{j+1} = t_j + ht_0\big(\alpha_{n_j}^{(t_j),H}\big) + 1
\end{equation*}
\begin{equation*}
    n_{j+1} = succ_H \big(min \big\{ h \in H \ \big|\ h \geq t_{j+1} \big\}\big)
\end{equation*}

\medskip

Notice that each term $\tau_i$ is well-defined, since $t_i \leq prec_H(n_i)$, as required by the definition of $\alpha_{n_i}^{(t_i),H}$.

Finally, since the sequence $\big(\alpha_{n_{i+k}}^{(t_i),H}\big)_{k \in \Nat}$ is decreasing by construction (cf. proof of Theorem \ref{prop:RTn}), we have:
\begin{equation*}
\tau_i = b_1\big(\alpha_{n_i}^{(t_i),H}\big) \geq b_1\big(\alpha_{n_{i+1}}^{(t_i),H}\big) \ \overset{(*)}{>_\LoX}\ b_1\big(\alpha_{n_{i+1}}^{(t_{i+1}),H}\big) = \tau_{i+1}
\end{equation*}

where $(*)$ is guaranteed by the choice of $t_{i+1}$: it is indeed large enough to ``lower'' $\alpha_{n_i}^{(t_i),H}$ under $\varepsilon_{\tau_i}$, so it must be large enough to lower $\alpha_{n_{i+1}}^{(t_i),H}$ under $\varepsilon_{\tau_i}$ as well. More precisely, since $\varepsilon_{\tau_i}$ is the maximum $\varepsilon$-term in $\alpha_{n_i}^{(t_i),H}$ and it appears at height $h = ht_0\big(\alpha_{n_i}^{(t_i),H}\big)$, then no terms $\varepsilon_x$ with $x \geq \tau_i$ can appear at height $h'>h$ in $\alpha_{n_{i+1}}^{(t_i),H}$. Hence, no such terms can appear in $\alpha_{n_{i+1}}^{(t_{i+1}),H}$. So $(*)$ holds.

Therefore, $\tau$ is an infinite descending sequence in $\LoX$.

\end{proof}
\end{theorem}

We have the following immediate corollary, yielding an alternative proof of Theorem 3.6 in \cite{Car-Zda:RT}.

\begin{corollary}
Over $\RCA_0$, $\RT^{!\omega}_2$ implies $\ACA_0^+$.
\end{corollary}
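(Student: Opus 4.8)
The statement is an immediate consequence of the two ingredients assembled just above it, so the plan is simply to \emph{chain} them over $\RCA_0$. First I feed $\RT^{!\omega}_2$ into Theorem~\ref{thm:RTomega}, which gives the well-ordering preservation principle $\forall\LoX(\WOP(\LoX\to\boldepsilon_\LoX))$. Then I invoke the Marcone--Montalb\'an characterization (\cite{Mar-Mon:11}, Theorem~1.7), according to which, over $\RCA_0$, the principle $\forall\LoX(\WOP(\LoX\to\boldepsilon_\LoX))$ is equivalent to $\ACA_0^+$. Composing the implication $\RT^{!\omega}_2\to\WOP(\LoX\to\boldepsilon_\LoX)$ with the implication $\WOP(\LoX\to\boldepsilon_\LoX)\to\ACA_0^+$ yields $\RCA_0\vdash\RT^{!\omega}_2\to\ACA_0^+$, which is exactly the asserted corollary.

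Two points deserve a word of care, even though each is supplied by a result I may assume. For our purposes the substantial half of the Marcone--Montalb\'an equivalence is the forward direction $\WOP(\LoX\to\boldepsilon_\LoX)\to\ACA_0^+$: it is this direction that converts the transfinite content of the $\varepsilon$-preservation principle into the existence of the $\omega$-th jump (recall $\ACA_0^+=\RCA_0+\forall X\exists Y\,(Y=(X)^{(\omega)})$), and I would cite it verbatim rather than reprove it. For the composition to be legitimate inside $\RCA_0$ one must also check that the operator $\boldepsilon_\LoX$ used in Theorem~\ref{thm:RTomega} is \emph{literally} the one of \cite{Mar-Mon:11} --- the same normal-form conventions (their Definition~2.3) and the $\boldomega$/$\boldepsilon$ compatibility (their Lemma~2.6), which is in any case already invoked in the proof of Theorem~\ref{thm:RTomega} --- so that the two black boxes dock without a concealed change of representation.

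Accordingly, I do not expect the corollary itself to present any new mathematical obstacle: all the difficulty has already been absorbed into Theorem~\ref{thm:RTomega}, whose engine is the $\varepsilon$-version of the staged exponent-extraction of Theorems~\ref{prop:RT3} and~\ref{prop:RTn}, driven by the unbounded dimension that $\RT^{!\omega}_2$ makes available. If forced to name the single delicate step underlying the whole chain, it is the height-peeling argument inside that theorem: the verification that advancing the extraction depth past $ht_0$ strictly lowers the leading $\varepsilon$-index and cannot be ``refilled'' by a taller occurrence of the same $\varepsilon$-number in the successor term. Granting that step (and hence Theorem~\ref{thm:RTomega}) together with the cited equivalence, the present statement follows by the two-line composition above.
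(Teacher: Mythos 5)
Your proposal is correct and coincides with the paper's own proof: both simply compose Theorem~\ref{thm:RTomega} with the Marcone--Montalb\'an equivalence of $\forall\LoX(\WOP(\LoX\to\boldepsilon_\LoX))$ with $\ACA_0^+$ (Theorem~1.7 of \cite{Mar-Mon:11}). Your added remarks about which direction of the equivalence is used and about matching the normal-form conventions of \cite{Mar-Mon:11} are sensible but not needed beyond what the paper already records.
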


\begin{proof}
From Theorem \ref{prop:RTomega} above and Theorem 1.7 in \cite{Mar-Mon:11}, showing that $\WOP(\LoX \to \boldepsilon_\LoX)$ is equivalent to $\ACA_0^+$ over $\RCA_0$.
\end{proof}

\section{Hindman's Theorem and base-$\omega$ exponentiation}

A sequence $\mathcal{B} = (B_i)_{i\in \Nat}$ of finite non-empty subsets of the positive integers is called a {\em block sequence} if
for all $i, j\in \Nat$, if $i < j$ then $\max(B_i) < \min(B_j)$; in this case we write $B_i < B_j$ for short.
We denote by $FU(\mathcal{B})$ the set of finite non-empty unions of elements of $\mathcal{B}$. Hindman's Finite Unions Theorem $\FUT$
(\cite{Hin:74}) states that every finite coloring of the finite non-empty subsets of the positive integers admits an infinite block 
sequence $B$ such that $FU(\mathcal{B})$ is monochromatic. 

The strength of Hindman's Finite Unions Theorem and its restrictions has attracted substantial interest
in recent times (see \cite{Car:overview} for an overview). We establish a connection between Hindman-type
theorems and well-ordering principles, along the lines of our previous results. 

Let $n \geq 1$ and $k \geq 2$. We denote by $FU^{=n}(\mathcal{B})$ the set of unions of $n$ many elements of $\mathcal{B}$, while we use $\FUT^{=n}_k$ to denote the restriction of $\FUT$ based on the number of terms in monochromatic unions for $k$-colourings. Hence, we can state $\FUT^{=n}_k$ as follows: for all colorings of the finite non-empty sets of positive integers
in $k \geq 2$ colours, there exists an infinite block sequence $\mathcal{B} = (B_i)_{i \in \Nat}$ of non-empty finite sets
of positive integers such that all unions of $n$ elements of $\mathcal{B}$ have the 
same colour. 


Since $\FUT^{=3}_2$ imply $\ACA_0$ over $\RCA_0$, (see \cite{CKLZ:20}), we know that 
$\FUT^{=3}_2$ implies $\WOP(\LoX\mapsto \boldomega^\LoX)$ over $\RCA_0$. 

We give a new proof  of this implication (actually, a slightly more general version of it) by a direct argument that furthermore 
establishes a Weihrauch reduction. Its interest also lies in the connection between 
Hindman's Theorem and principles related to transfinite ordinals. 

Let $\LoX$ be a linear ordering. Let $\alpha=(\alpha_i)_{i \in \Nat}$ be an infinite decreasing sequence in 
$\boldomega^\LoX$. We show, using $\FUT^{=n}_k$ for $n \geq 3$ and $k \geq 2$, that there exists an infinite decreasing sequence in 
$\LoX$. 
The proof  uses ideas from the proof of $\FUT^{=3}_2 \to \ACA_0$ (Proposition 3.1, \cite{CKLZ:20}) adapted to the present context, based on the following analogy between deciding 
the Halting Set $K$ and computing an infinite descending sequence in $\LoX$. Given an enumeration of $K$ and a number $n$, $\RCA_0$ knows that there is an $\ell$ such that all numbers in $K$ below $n$ appear within $\ell$ steps of 
the enumeration, but is not able to compute this $\ell$. Similarly, given an ordinal $\alpha$ in an infinite decreasing sequence
in $\boldomega^\LoX$, $\RCA_0$ knows that there is an $\ell$ such that if a term of $\alpha$ ever decreases, it will do so 
by the $\ell$-th term of the infinite descending sequence, but it is unable to compute such an $\ell$. More precisely, 
while one can computably run through the given infinite descending sequence to find the first point at which an exponent 
of a component of $\alpha$ is decreased, we can not locate computably the leftmost such component. An appropriately 
designed coloring will ensure that the information about such an $\ell$ can be read-off the elements of a solution to 
Hindman's Theorem. 

We start with the following simple Lemma.

\begin{lemma}\label{lem:decribile-terms} The following is provable in $\RCA_0$: If $\alpha=(\alpha_i)_{i\in\Nat}$ is an infinite descending sequence in $\boldomega^\LoX$, then
$$\forall n\ \exists n'\ \exists m < lh(\alpha_n) \ \big(n'>n \ \land e_m(\alpha_n) >_{\LoX} e_m(\alpha_{n'})\big).$$
\end{lemma}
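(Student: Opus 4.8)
The plan is to fix $n$ and to exhibit the witness $n'$ by a \emph{bounded} search through the terms immediately following $\alpha_n$, exploiting the fact that a string in $\boldomega^\LoX$ has only finitely many proper initial segments. Write $L = lh(\alpha_n)$; since the empty string is the $<_{\boldomega^\LoX}$-least element of $\boldomega^\LoX$ it cannot occur in an infinite descending sequence, so $L \geq 1$ and the display $\exists m < lh(\alpha_n)$ is non-vacuous.

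First I would record the dichotomy governing the comparison of $\alpha_n$ with any later term. For $n' > n$ we have $\alpha_{n'} <_{\boldomega^\LoX} \alpha_n$, and the two strings are distinct (the sequence is strictly descending), so exactly one of the following holds: either \emph{(a)} $\alpha_{n'}$ is a proper initial segment of $\alpha_n$, or \emph{(b)} the two strings genuinely disagree at $d = \Delta(\alpha_n, \alpha_{n'})$, in which case $d < \min(lh(\alpha_n), lh(\alpha_{n'})) \leq L$ and, because the order is lexicographic and $\alpha_n >_{\boldomega^\LoX} \alpha_{n'}$, we have $e_d(\alpha_n) >_\LoX e_d(\alpha_{n'})$. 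The third \emph{a priori} possibility, that $\alpha_n$ be a proper initial segment of $\alpha_{n'}$, is excluded, since it would force $\alpha_n <_{\boldomega^\LoX} \alpha_{n'}$. In case \emph{(b)} we are finished with $m = d$, so everything reduces to ruling out that case \emph{(a)} holds for \emph{every} $n' > n$.

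This is the one genuine point of the argument, and it is settled by finiteness. The proper initial segments of $\alpha_n$ are precisely the strings of lengths $0, 1, \dots, L-1$, so there are exactly $L$ of them. I would then look at the $L+1$ consecutive terms $\alpha_{n+1} >_{\boldomega^\LoX} \dots >_{\boldomega^\LoX} \alpha_{n+L+1}$; being strictly descending they are pairwise distinct, so by the finite pigeonhole principle at least one of them, say $\alpha_{n'}$ with $n < n' \leq n + L + 1$, is not a proper initial segment of $\alpha_n$. For this $n'$ alternative \emph{(b)} applies and delivers $m = \Delta(\alpha_n, \alpha_{n'}) < lh(\alpha_n)$ with $e_m(\alpha_n) >_\LoX e_m(\alpha_{n'})$, as desired.

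Finally I would verify formalizability in $\RCA_0$. From the given sequence $\alpha$ the quantities $lh(\alpha_i)$, $e_m(\alpha_i)$ and $\Delta(\alpha_i, \alpha_j)$ are all computable, the witness $n'$ is produced by a search bounded by $n + lh(\alpha_n) + 1$, and the dichotomy above is decided by a $\Sigma^0_0$ test; the whole statement is thus an elementary bounded consequence of strict descent, well within $\RCA_0$. The main obstacle, such as it is, is purely bookkeeping around the definition of $\Delta$ — cleanly separating the genuine-disagreement case \emph{(b)} from the truncation case \emph{(a)} — together with the observation that the truncation case, which lowers the length, can recur only finitely often before a real decrease among the first $L$ components must appear.
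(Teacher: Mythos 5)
Your proof is correct, and it rests on the same two pillars as the paper's: the dichotomy that a later term $\alpha_{n'} <_{\boldomega^\LoX} \alpha_n$ is either a proper initial segment of $\alpha_n$ or genuinely disagrees with it at some position $d < lh(\alpha_n)$ where its exponent drops, and the observation that the initial-segment case cannot persist because lengths are finite. The implementations differ, though. The paper argues by contradiction: assuming no $n'$ works, it proves by $\Delta^0_1$-induction that every $\alpha_{p+1}$ with $p \geq n$ is a proper initial segment of $\alpha_p$ (and of $\alpha_n$), so the lengths $lh(\alpha_p)$ form an infinite strictly descending sequence of naturals, contradicting $\WO(\omega)$. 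You instead run a direct, \emph{bounded} argument: among the $L+1$ pairwise distinct terms $\alpha_{n+1},\dots,\alpha_{n+L+1}$ (where $L = lh(\alpha_n)$), finite pigeonhole forces at least one to fall into the genuine-disagreement case, since $\alpha_n$ has at most $L$ proper initial segments. What your version buys is an explicit primitive recursive bound $n' \leq n + lh(\alpha_n) + 1$ on the witness, which makes the $\RCA_0$-formalizability immediate (a $\Sigma^0_0$ test inside a bounded search) and is in the spirit of the paper's later remark that solutions are computed from subterms of the instance; the paper's version is slightly shorter to state but extracts no bound and leans on induction over an unbounded predicate. Both are fine in $\RCA_0$; yours is the more effective rendering of the same idea.
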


\begin{proof}

Assume by way of contradiction that the statement is false, as witnessed by $n$, and recall that for any distinct $\sigma, \tau \in \boldomega^\LoX$, we have $\sigma < \tau$ if and only if either (1.) $\sigma$ is an initial segment of $\tau$, or (2.) there exists $m$ such that $e_m(\sigma) <_\LoX e_m(\tau)$ and  $e_n(\sigma) = e_n(\tau)$ for each $n<m$. Then we can show that:
$$
\forall p\ (p \geq n \rightarrow (\alpha_{p+1} \text{ is an initial segment of both } \alpha_p \text{ and } \alpha_n))
$$
by $\Delta^0_1$-induction.

The case $p=n$ is trivial, since $\alpha_n >_{\LoX} \alpha_{n+1}$ and (2.) cannot hold by assumption.

For $p>n$, by induction hypothesis we know that $\alpha_p$ is an initial segment of $\alpha_n$. 

Since $\alpha_{p+1} >_{\LoX} \alpha_p$, $\ \alpha_{p+1}$ must be an initial segment of $\alpha_p$, otherwise the leftmost component differing between $\alpha_{p+1}$ and $\alpha_p$ -- i.e. the component of $\alpha_{p+1}$ with index $m$ witnessing (2.) -- would contradict our assumption, for we would have $m < lh(\alpha_p)$ and $e_m(\alpha_{p+1}) <_\LoX e_m(\alpha_p) = e_m(\alpha_n)$.

So  $\alpha_{p+1}$ must be an initial segment of $\alpha_p$ and, by our assumption, it must be an initial segment of $\alpha_n$ as well.

The previous statement implies that:
$$
\forall p\ (p \geq n \,\rightarrow\, lh(\alpha_p) > lh(\alpha_{p+1}))
$$

hence contradicting $\WO(\omega)$. This concludes the proof.
\end{proof}

\begin{theorem}\label{prop:RTomega}
Let $n \geq 3, k \geq 2$. Over $\RCA_0$, $\FUT^{=n}_k$ implies $\WOP(\mathcal{X} \to \boldomega^\mathcal{X})$. Moreover, 
$$\WOP(\mathcal{X} \to \boldomega^\LoX) \leq_\W \FUT^{=n}_k.$$
\end{theorem}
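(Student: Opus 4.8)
The plan is to argue by contraposition, following the template of the proofs of Theorems~\ref{prop:RT3} and~\ref{prop:RTn}. Assuming $\FUT^{=n}_k$, I fix a linear order $\LoX$ together with an infinite descending sequence $\alpha=(\alpha_i)_{i\in\Nat}$ in $\boldomega^\LoX$, and I aim to produce an infinite descending sequence in $\LoX$, thereby witnessing $\neg\WO(\LoX)$. Lemma~\ref{lem:decribile-terms} will be the workhorse: it guarantees that truncations (passing to an initial segment) cannot by themselves account for the whole descent, so that along $\alpha$ some exponent sitting at a \emph{fixed} position below $lh(\alpha_n)$ must eventually strictly drop. This is exactly the fact that prevents the comparing-exponent extraction from stalling at a stationary leading exponent, and it plays here the role of the ``halting'' analogy sketched before the lemma: one can computably find \emph{some} later decrease, but not the leftmost position at which a decrease is bound to occur, and the colouring must supply that missing information.

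First I would define, from $\alpha$, a colouring $c^{(\alpha)}$ of the finite non-empty subsets of the positive integers taking finitely many values. The design principle is the one already used for the colourings $C^{(\sigma)}$ of Theorems~\ref{prop:RT3} and~\ref{prop:RTn}: since the position at which two components of $\alpha$ first differ is unbounded, the colour must never record that position outright, but only \emph{relative} information obtained by comparing consecutive $\Delta$-values and the associated comparing exponents $e_{\Delta}(\cdot)$. Concretely, from a finite set $S$ I read off three indices into $\alpha$ -- its extremes $\min(S)<\max(S)$ together with one intermediate landmark -- and let the colour record whether $\Delta$ strictly decreases across them, mirroring the $\RT^3_2$ colouring; this is why unions of at least three blocks are needed, and why two colours suffice (so any $n\geq 3$ and any $k\geq 2$ work, extra colours being harmless).

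With the colouring in hand I would invoke $\FUT^{=n}_k$ to obtain an infinite block sequence $\mathcal{B}=(B_i)_{i\in\Nat}$ all of whose $n$-fold unions in $FU^{=n}(\mathcal{B})$ receive a single colour $\chi$. The block minima $\min(B_0)<\min(B_1)<\cdots$ then play the role that the homogeneous set $H=\{h_0<h_1<\cdots\}$ played before: they furnish the increasing sequence of indices along which I extract. I would rule out the bad values of $\chi$ exactly as in the case analyses of Theorems~\ref{prop:RT3} and~\ref{prop:RTn}: a colour recording a strictly descending chain of $\Delta$-values yields an infinite descending sequence in $\Nat$, contradicting $\WO(\omega)$, while Lemma~\ref{lem:decribile-terms} forbids the ``pure truncation'' colour from persisting, since it forces a genuine exponent decrease at a fixed position below $lh(\alpha_{\min(B_i)})$. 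The surviving colour is the one for which the comparing exponents strictly decrease, and then the sequence $i\mapsto e_{\Delta(\alpha_{\min(B_i)},\,\alpha_{\min(B_{i+1})})}\big(\alpha_{\min(B_i)}\big)$, read off at the successive block minima, is an infinite descending sequence in $\LoX$, as desired.

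For the reduction I would let $\Phi$ send the instance $\alpha$ (a witness to $\neg\WO(\boldomega^\LoX)$) to the colouring $c^{(\alpha)}$, which is computable from $\alpha$ and is an instance of $\FUT^{=n}_k$, and let $\Psi$ send $\alpha\oplus\mathcal{B}$, for any solution block sequence $\mathcal{B}$, to the extracted descending sequence in $\LoX$; since $\Psi$ consults $\alpha$ in addition to $\mathcal{B}$ this is a Weihrauch, not a strong Weihrauch, reduction, and as promised in the introduction the output consists only of terms appearing as sub-terms of $\alpha$. Everything is carried out in $\RCA_0$, the colouring and the extraction being $\Delta^0_1$ in the given data and the case analysis using only $\WO(\omega)$ and Lemma~\ref{lem:decribile-terms}. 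I expect the main obstacle to be that a colouring in the Hindman setting sees a union of $n$ blocks only as a \emph{set}, with its block decomposition invisible, so the three ordered indices that drive the $\RT^3_2$-style argument are not literally available; the intended resolution is to let the block minima supply those indices and to lean on Lemma~\ref{lem:decribile-terms} to guarantee that the leftmost eventually-decreasing position is pinned down consistently across consecutive blocks, so that the extracted sequence genuinely descends in $\LoX$ rather than freezing at a stationary exponent.
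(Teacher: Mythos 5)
There is a genuine gap at the heart of your proposal: the colouring you sketch cannot actually be defined, and even if it could, the homogeneity delivered by $\FUT^{=n}_k$ would not support the $\RT^3_2$-style case analysis you intend to run. An instance of $\FUT^{=n}_k$ is a colouring of \emph{arbitrary} finite non-empty sets of positive integers, fixed before any block sequence is known; it sees a union $B_{i_1}\cup\dots\cup B_{i_n}$ only as an unstructured set, so ``the block minima'' are simply not available to it as landmarks. You acknowledge this obstacle in your closing paragraph, but the proposed resolution is circular: the colouring would need to know the solution in order to read off the three ordered indices that drive the comparison of $\Delta$-values. Moreover, even granting some canonical choice of three elements of $S$, monochromaticity of $FU^{=n}(\mathcal{B})$ gives one colour shared by all $n$-fold unions, not the coherence across \emph{all} ordered triples from a single infinite set that Cases~1 and~2 of Theorem~\ref{prop:RT3} exploit; both the descending chain of $\Delta$-values and the telescoping of comparing exponents require exactly that triple-wise coherence. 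If a direct simulation of the $\RT^3_2$ colouring by a Hindman colouring were available along these lines, one would essentially have $\RT^3_2\leq_\W \FUT^{=n}_k$, which is not what is being proved and is not known.

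The paper's proof uses an entirely different mechanism, which your proposal is missing. One flattens $\alpha$ into the sequence $\beta$ of all components of all terms, defines when an index is \emph{decreasible} (a later index at the same position witnesses a strict drop in $\LoX$) and, via Lemma~\ref{lem:decribile-terms}, notes that every term of $\alpha$ contributes a decreasible component. The colouring $g(S)$ then counts, \emph{modulo} $k$, the number of positions of $S$ that are \emph{important}, i.e.\ at which some decreasible index below $\min(S)$ first acquires a decreaser. Homogeneity of the block sequence, combined with strong $\Sigma^0_1$-bounding and the observation that appending one further block beyond the true bound $\ell$ would shift the count by exactly one, forces the existence of a function $f$ with ``Property~P'': every decreasible $i$ is decreased by some $j\leq f(i)$. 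The descending sequence in $\LoX$ is then extracted in a separate $f$- and $\beta$-computable recursion (Steps $0$ and $s+1$), tracking the leftmost decreasible position, not by the formula $i\mapsto e_{\Delta(\alpha_{\min(B_i)},\alpha_{\min(B_{i+1})})}(\alpha_{\min(B_i)})$. This counting-mod-$k$ device is the key idea --- it is precisely how the Hindman solution encodes the non-computable bound in the Halting-set analogy described before Lemma~\ref{lem:decribile-terms} --- and note that it uses the number of colours $k$ essentially, rather than treating extra colours as harmless. Without this idea, or a substitute for it, the argument does not go through.
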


\begin{proof}
Assume by way of contradiction $\neg\WO(\boldomega^\LoX)$, and let $\alpha= (\alpha_n)_{n\in \Nat}$ 
be an infinite descending sequence in $\boldomega^\LoX$. 
For this proof, it is convenient to use an $\alpha$-computable sequence $\beta$ of all the components of the terms $\alpha_n$, enumerated in order of ``appearance'', i.e. $\beta = \langle e_0(\alpha_0), e_1(\alpha_0), \dots, e_{lh(\alpha_0)-1}(\alpha_0), e_0(\alpha_1), e_1(\alpha_1), \dots \rangle$. Formally we construct such sequence by first defining $\theta : \Nat\times\Nat\to \Nat$ as follows: $\theta(n,m)= m + \sum_{k < n} lh(\alpha_k)$. The partial function $\theta$ is clearly bijective (meaning that it is a bijection between its domain of definition and its codomain). We accordingly fix functions $t: \Nat \to \Nat$ and $p:\Nat\to\Nat$ such that for each $n\in\Nat$ we have
$\theta(t(n),p(n))=n$. The sequence $(\beta_h)_{h\in\Nat}$ of components of terms in $\alpha$ is then defined
by setting $\beta_h = \alpha_{t(h),p(h)}$. Intuitively, $t(h)$ is the element of $\alpha$ from which $\beta_h$ has been ``extracted'', while $p(h)$ is the position of the component $\beta_h$ within $\alpha_{t(h)}$.


We call $i$ \emph{decreasible} if there exists $j>i$ such that $p(j)=p(i)$ and $\beta_j > \beta_i$. In that case, we say that $j$ \emph{decreases} $i$ and that $j$ is a \emph{decreaser} of $i$. 

Using this terminology, Lemma \ref{lem:decribile-terms} states that each element of $\alpha$ contains at least one decreasible component.

Also, we define a number $j\in [0,r]$ \textit{important in} $S=\{n_0, \ldots,n_r\}$ if the following condition holds:
$$
(\exists i< n_0)\ \,\exists i' \in [n_{j-1}, n_j)\, \text{ s.t. } i' \text{ decreases } i \text{  and } \neg\exists i'' < n_{j-1}\, \text{  s.t. } i'' \text{  decreases } i,$$
where we set  $n_{-1}=0$.
\medskip

Now suppose that $f:\Nat \to \Nat$ is a function with the following property:

\bigskip
{\em Property P}: For all $i\in \Nat$,
if $i$ is decreasible, then it is decreased by some $j \leq f(i)$. 

\bigskip
We first show that given such an $f$ we can compute (in $f$ and $\beta$) 
an infinite descending sequence $(\sigma_i)_{i\in\Nat}$ in $\mathcal{X}$ as follows. 

\medskip
{\bf Step $0$}.  
Let $i_0$ be the least decreasible index of $\beta$, and let $j_0$ be the least decreaser of $i_0$. By Lemma \ref{lem:decribile-terms}, $\beta_{i_0}$ must be a component of $\alpha_0$, i.e. $t(i_0) = 0$, so we can find $i_0$ by just taking the least decreasible $i^* < lh(\alpha_0)$. Notice that we can decide whether $i^*$ is decreasible by just inspecting $\beta$ up to the index $f(i^*)$, since $f$ has the Property $P$.

We set $\sigma_0 = \beta_{j_0}$ and observe that $p(i) \geq p(i_0)$ for each decreasible $i>i_0$. 
Suppose otherwise as witnessed by $i^*$, and let $j^*$ be the least decreaser of $i^*$. By definition of decreaser, $p(j^*) = p(i^*) < p(i_0)$ and $\beta_{i^*} > \beta_{j^*}$. However $i_0$ is the least decreasible $\beta$-index of some component of $\alpha_0$, then $\beta_{i^*} = \beta_z$, where $z = \theta(0,p(i^*))$. Hence, $\beta_z > \beta_{j^*}$ and $p(z)=p(i^*)=p(j^*)$, but in that case $z$ would be decreasible (by $j^*$) and $p(z) < p(i_0)$, which implies $z < i_0$ since $t(z)=t(i_0)=0$, thus contradicting the minimality of $i_0$.

\bigskip
{\bf Step $s+1$}. Suppose $i_s, j_s, \sigma_s$ are defined, $(\sigma_t)_{t \leq s}$ is decreasing in $\mathcal{X}$ and 
$p(i) \geq p(i_s)$ for each decreasible $i > i_s$. 

Let $i_{s+1}$ be the least decreasible index of $\beta$ larger than or equal to $j_s$, and let $j_{s+1}$ be the least decreaser of $i_{s+1}$. By Lemma \ref{lem:decribile-terms} and the fact that no decreasible $i > i_s$ can have $p(i) < p(i_s)=p(j_s)$, $\beta_{i_{s+1}}$ must be a component of $\alpha_{t(j_s)}$, namely the leftmost whose $\beta$-index is decreasible. So we can find $i_{s+1}$ 
by just taking the least decreasible $i^* \in [j_s,\ j_s + lh(\alpha_{t(j_s)}) - p(j_s))$. Notice that we can decide whether $i^*$ is decreasible by just inspecting $\beta$ up to the index $f(i^*)$, since $f$ has the Property $P$.

We set $\sigma_{s+1} = \beta_{j_{s+1}}$. As we noted above, $\beta_{i_{s+1}}$ must be either $\beta_{j_s}$ or a component of $\alpha_{t(j_s)}$ on the right of $\beta_{j_s}$, i.e. $t(i_{s+1}) = t(j_s)$ and $p(i_{s+1}) \geq p(j_s)$, so $\beta_{j_s} \geq \beta_{i_{s+1}}$. Then, $\sigma_s > \sigma_{s+1}$ because $\sigma_s = \beta_{j_s} \geq \beta_{i_{s+1}} > \beta_{j_{s+1}} = \sigma_{s+1}$. Finally, we observe that the last part of the inductive invariant is guaranteed as well, since $p(i) \geq p(i_{s+1})$ for each decreasible $i>i_{s+1}$. 
Suppose otherwise as witnessed by $i^*$, and let $j^*$ be the least decreaser of $i^*$. By definition of decreaser, $p(j^*) = p(i^*) < p(i_{s+1})$ and $\beta_{i^*} > \beta_{j^*}$. However $\beta_{i_{s+1}}$ is the leftmost component of $\alpha_{t(j_s)}$ whose $\beta$-index is decreasible, then $\beta_{i^*} = \beta_z$, where $z = \theta({t(j_s)},p(i^*))$. Hence, $\beta_z > \beta_{j^*}$, but in that case $z$ would be decreasible (by $j^*$) and $p(z) = p(i^*) < p(i_{s+1})$, which implies $z < i_{s+1}$ since $t(z)=t(j_s)=t(i_{s+1})$, thus contradicting the minimality of $i_{s+1}$.

\bigskip
We now show how to obtain a function satisfying the Property $P$ from a solution of $\FUT^{=n}_k$ for
a suitable colouring. Let $g: FIN(\Nat^+) \rightarrow k$ as follows:
$$
g(S) = card\{j\ |\ j \text{ is important in }  S\} \text{ mod } k.
$$

\medskip

By $\FUT^{=n}_k$ let $\mathcal{B} = \{B_0 < B_1 < B_2 < \ldots\}$ be an infinite block sequence such that $FU^{=n}(\mathcal{B})$ is monochromatic under $g$, and let $c<k$ be the colour of $\mathcal{B}$.

\begin{claim} \label{clm:shorter-union}
Given $S_0\!<\!\ldots\!<\!S_{n-3}$ in $\mathcal{B}$, there exists $S \in \mathcal{B}$ such that $S_{n-3} < S$ and $g(S_0 \cup \ldots \cup S_{n-3} \cup S) = c$.
\end{claim}

Fix $S_0<\ldots<S_{n-3}$ in $\mathcal{B}$ and let $\ell$ be the actual upper bound of the minimal indexes decreasing all the decreasible $j < \min(S_0)$. By this we mean the $\ell$ given by the following instance of strong $\Sigma^0_1$-bounding (in $\RCA_0$): 
$$
\forall m \,\exists \ell \,\forall j < m \,( \exists d \,(d \text{ decreases } j) \to \exists d < \ell \,(d \text{ decreases } j))),
$$
where we can take $m=\min(S_0)$. 

Since $\mathcal{B}$ is an infinite block sequence, there exists $S \in \mathcal{B}$ such that $\min(S) > \ell > \max(S_{n-3})$. Then, for any $T \in \mathcal{B}$ with $S < T$, we have that $g(S_0 \cup \ldots \cup S_{n-3} \cup S) = g(S_0 \cup \ldots \cup S_{n-3} \cup S \cup T)$, since no elements in $T$ are important in $S_0 \cup \ldots \cup S_{n-3} \cup S \cup T$. Also, by monochromaticity of $\mathcal{B}$, $g(S_0 \cup \ldots \cup S_{n-3} \cup S \cup T) = c$, so $g(S_0 \cup \ldots \cup S_{n-3} \cup S) = c$, hence proving the Claim.

\bigskip

Now, we define $f: \Nat \to \Nat$ as $f(i) = \max(B_q)$, where $q$ is minimal such that $g(B_p \cup B_{p+1} \cup B_{p+2} \cup \ldots \cup B_{p+n-3} \cup B_q) =c$, with $p$ minimal such that $i < \min(B_p)$ and $B_p < B_{p+1} < B_{p+2} < \ldots < B_{p+n-3} < B_q$. Notice that $q$ exists by Claim \ref{clm:shorter-union}. Also, $f$ has the Property $P$, i.e., each decreasible $i$ is decreased by some $j \leq f(i)$.

In order to prove this, assume by way of contradiction that $i$ is decreasible and $p,q$ are minimal such that $i < \min(B_p)$, $B_p < B_{p+1} < \ldots < B_{p+n-3} < B_q$ and $g(B_p \cup B_{p+1} \cup \ldots \cup B_{p+n-3} \cup B_q) = c$, but $i$ is decreasible only by numbers larger than $f(i) = \max(B_q)$. 

By strong $\Sigma^0_1$-bounding, let $\ell$ be the actual upper bound of the minimal indexes decreasing all the decreasible $j < \min(B_p)$. Since $\mathcal{B}$ is an infinite block sequence, there exists $B \in \mathcal{B}$ such that $\min(B) > \ell > \max(B_q)$. Now, consider:
$$
B_p \cup \ldots \cup B_{p+n-3} \cup B_q \cup B = \{b_0, \ldots, b_r, b_{r+1}, \ldots b_t\},$$
where $b_0 = \min(B_p) = \min(B_p \cup \ldots \cup B_{p+n-3} \cup B_q \cup B)$, $b_r = \max(B_q)$ and $b_{r+1} = \min(B)$. Clearly, $j \leq t$ is important in $B_p \cup \ldots \cup B_{p+n-3} \cup B_q \cup B$ if and only if either $j \leq r$ and $j$ is important in $B_p \cup \ldots \cup B_{p+n-3} \cup B_q$, or $j = r + 1$; hence, $g(B_p \cup \ldots \cup B_{p+n-3} \cup B_q) \neq g(B_p \cup \ldots \cup B_{p+n-3} \cup B_q \cup B) = c$, contra our assumption that $g(B_p \cup \ldots \cup B_{p+n-3} \cup B_q) = c$.
\end{proof}

We obtain the following immediate corollary. 

\begin{corollary}
Let $n \geq 3, k \geq 2$. Over $\RCA_0$, $\FUT^{=n}_k$ implies $\ACA_0$. 
\end{corollary}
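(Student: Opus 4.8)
The plan is to derive the Corollary purely by transitivity of implications over $\RCA_0$, with no fresh combinatorial input. Theorem \ref{prop:RTomega} has already shown, for every $n \geq 3$ and $k \geq 2$, that $\RCA_0$ proves $\FUT^{=n}_k \to \WOP(\LoX \to \boldomega^\LoX)$. It therefore remains only to connect the well-ordering preservation principle to arithmetical comprehension.

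First I would invoke Girard's theorem (\cite{Gir:87}, with a proof also in \cite{Hir:94}): over $\RCA_0$, the principle $\WOP(\LoX \to \boldomega^\LoX)$ is equivalent to $\ACA_0$. For the Corollary only the forward direction is needed, namely $\WOP(\LoX \to \boldomega^\LoX) \to \ACA_0$ over $\RCA_0$, which is precisely the non-trivial half of that equivalence. This is the same ingredient already used in the Corollary following Theorem \ref{prop:RT3} to pass from $\RT^3_2$ to $\ACA_0$, so nothing new must be established.

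Chaining these two implications inside $\RCA_0$ then gives $\RCA_0 \vdash \FUT^{=n}_k \to \ACA_0$ for all $n \geq 3$ and $k \geq 2$, which is exactly the statement of the Corollary. Because each implication is itself a theorem of $\RCA_0$, their composition is as well, so no additional induction, coding, or separate verification of a reduction is required at this step.

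I expect no real obstacle here. All the genuine work---designing the coloring $g$ through the notion of \emph{important} indices, extracting a function with Property~$P$ from a solution to $\FUT^{=n}_k$, and reconstructing a descending sequence in $\LoX$---has already been carried out in the proof of Theorem \ref{prop:RTomega}. The only point that warrants a moment's care is the bookkeeping that the cited direction of Girard's equivalence is $\WOP(\LoX \to \boldomega^\LoX) \to \ACA_0$, so that it composes correctly with Theorem \ref{prop:RTomega} to yield the desired implication.
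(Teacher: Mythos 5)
Your proposal is correct and takes essentially the same route as the paper: the corollary is obtained by composing Theorem \ref{prop:RTomega} with the known fact that $\WOP(\LoX \to \boldomega^{\LoX})$ implies $\ACA_0$ over $\RCA_0$. The paper's one-line proof cites \cite{CKLZ:20} rather than \cite{Gir:87, Hir:94} at this point, but the Girard--Hirst direction you invoke is exactly the ingredient the paper itself uses for the analogous corollary in Section 2, so the substance is identical.
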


\begin{proof}
From Theorem \ref{prop:RTomega} above and Theorem 3.1 in \cite{CKLZ:20}.
\end{proof}

The proof of Theorem \ref{prop:RTomega} can be easily adapted to $\FUT^{\leq 2}_k$ (in place of $\FUT^{=n}_k$). Yet, while $\FUT^{=n}_k$ is provably equivalent to $\ACA_0$ and so Theorem \ref{prop:RTomega} is an optimal result, we do not know the actual strength of $\FUT^{\leq 2}_k$, since we only know that $\ACA_0 \leq \FUT^{\leq 2}_k \leq \FUT \leq \ACA_0^+$. Hence, extending the above approach -- adapted to $\FUT^{\leq 2}_k$ -- to stronger well-ordering principles would improve the known lower bound on $\FUT^{\leq 2}_k$ and, a fortiori, on full Hindman's Theorem.

\section{Conclusion and perspectives}
We have presented a new approach for proving implications and Weihrauch reductions
from Ramsey-theoretic theorems to well-ordering principles (at the level of 
$\ACA_0$, $\ACA_0'$ and $\ACA_0^+$). 

This approach is inspired by Loebl and Ne\v{s}etril's independence proof of Paris-Harrington principle from Peano Arithmetic \cite{Loe-Nes:92}, was first proposed by the first and third author in \cite{Car-Zda:RT}. It yields elegant combinatorial proofs of the implications $\RT^3_2\to \ACA_0$, $\forall n \RT^n \to \ACA_0'$ and $\RT^{!\omega}_2 \to \ACA_0^+$ over
$\RCA_0$. The method also ensures uniform computable reductions. 

$\RT^{!\omega}_2$ generalizes to a Ramsey's Theorem for bicolorings of exactly $\alpha$-large sets \cite{Far-Neg:08}, and we conjecture that the method presented here can be extended to relate such general version of the theorem to the systems $\Pi_{\omega^\beta}^0$-$\CA_0$ for every $\beta \in \omega^{\textsc{ck}}$ by using
the characterization of the latter systems in terms of the well-ordering preservation 
principles $\forall \LoX (\WOP(\LoX \to \pmb{\varphi}(\beta,\LoX)))$ \cite{Mar-Mon:11}. 

Furthermore, we established a Weihrauch reduction of the well-ordering principle
$\WOP(\LoX\to\boldomega^\LoX)$ to Hindman's Theorem for $k$-colourings and sums/unions of exactly $n$ numbers/sets, for any $n \geq 3$ and $k \geq 2$. Such a direct
connection between Hindman's Theorem and well-ordering principles might be fruitful for assessing
the strength of Hindman-type principles.


\begin{thebibliography}{1}

 \bibitem{Afs-Rat:09}
 Bahareh Afshari, Michael Rathjen.
 \newblock\ Reverse Mathematics and well-ordering principles: a pilot study.
 \newblock\ {\em Annals of Pure and Applied Logic}, 160(3):231--237, 2009.


\bibitem{Car:overview}
Lorenzo~Carlucci.
Restrictions of Hindman's Theorem: An Overview. 
In: De Mol L., Weiermann A., Manea F., Fernandez-Duque D. (eds), 
{\em Connecting with Computability. CiE 2021}. Lecture Notes in Computer Science, vol. 12813, pp. 94-105.

\bibitem{CKLZ:20}
Lorenzo Carlucci, Leszek A.~Ko{\l}odziewczyk, Francesco~Lepore, Konrad~Zdanowski. 
New bounds on restrictions of Hindman's Finite Sums Theorem,
{\em Computability}, 9:2, 139--152, 2020.


\bibitem{Car:12}
\newblock Lorenzo Carlucci, Konrad Zdanowski.
\newblock {\em A Note on Ramsey Theorems and Turing Jumps}.
\newblock In: S.B. Cooper, A. Dawar, B. Löwe (eds.), {\em How the World Computes. CiE 2012}. 
Lecture Notes in Computer Science, vol. 7318, 89--95.

\bibitem{Car-Zda:RT} 
\newblock Lorenzo Carlucci, Konrad Zdanowski.
\newblock The strength of Ramsey Theorem for coloring relatively large sets. 
\newblock Journal of Symbolic Logic, 79:1, 89--102, 2014.

\bibitem{DDHMS:16}
Fran\c{c}ois~G.~Dorais, Damir~Dzhafarov, Jeff~L.~Hirst, Joseph~P.~Mileti, Paul~Shafer. 
On uniform relationships between combinatorial problems. 
{\em Transactions of the American Mathematical Society} 368(2), 1321--1359, 2016.


\bibitem{Far-Neg:08} 
Vassiliki Farmaki, Stelios Negrepontis.
Schreier Sets in Ramsey Theory. 
\textit{Transactions of the American Mathematical Society}, 360:2, 849--880, 2008.

 \bibitem{Gir:87}
 Jean-Yves Girard.
 \newblock\ Proof Theory and Logical Complexity.
 \newblock\ Biblipolis, Naples, 1987.

 \bibitem{Har-Par:77}
 Leo Harrington, Jeff Paris.
 \newblock\ A mathematical incompleteness in Peano Arithmetic.
 \newblock\ In J. Barwise, editor, {\em Handbook of Mathematical Logic}, pages
   1133--1142. North-Holland, 1977.

\bibitem{Hin:74}
Neil~Hindman. 
Finite sums from sequences within cells of a partition of N.
{\em Journal of Combinatorial Theory Series A} 17 (1974), 1--11.


 \bibitem{Hir:94}
 Jeff Hirst.
 \newblock\ Reverse Mathematics and ordinal exponentiation.
 \newblock\ {\em Annals of Pure and Applied Logic}, 66(1):1--18, 1994.

 \bibitem{Joc:72}
 Carl G. Jockusch Jr.
 \newblock\ Ramsey's Theorem and Recursion Theory.
 \newblock\ {\em The Journal of Symbolic Logic}, 37(2):268--280, 1972.
 
 \bibitem{Loe-Nes:92}
 Martin Loebl and Jaroslav Ne{\v{s}}et{\v{r}}il.
 \newblock\ An unprovable Ramsey-type theorem.
 \newblock\ {\em Proceedings of the American Mathematical Society}, 116(3):819--824, 1992.

 \bibitem{Mar-Mon:11}
 Alberto Marcone and Antonio Montalb\`an.
 \newblock\ The Veblen function for computability theorists.
 \newblock\ {\em The Journal of Symbolic Logic}, 76(2):575--602, 2011.

\bibitem{McA:85}
Kenneth McAloon.
Paris-Harrington incompleteness and transfinite progressions of theories.
In \textit{Proceedings of Symposia in Pure Mathematics} 42, American Mathematical Society 
(1985) 447--460.

\bibitem{Pud-Rod:82} 
Pavel Pudl\`ak, Vojtech R\"{o}dl. 
Partition theorems for systems of finite subsets of integers. 
\textit{Discrete Mathematics}, 39:1, 67--73, 1982.

\bibitem{Rat:pre} 
Michael Rathjen. 
Well-Ordering Principles in Proof Theory and Reverse Mathematics.
Preprint, available at \texttt{arXiv:2010.12453}.

\bibitem{Rat-Wei:11} 
Michael Rathjen, Andreas Weiermann. 
Reverse Mathematics and Well-ordering Principles.
In: S.B. Cooper, A. Sorbi (eds.), {\it Computability} in context, pages 351--370. Imperial College Press, 2011.


 \bibitem{Sim:99}
 Stephen G. Simpson.
 \newblock\ Subsystems of Second Order Arithmetic.
 \newblock\ Springer, 1999

\end{thebibliography}
\end{document}